\newcommand\oc{\operatorname{oc}}
\newcommand \reg{\operatorname{reg}}
\newcommand \Tor{\operatorname{Tor}}
\newcommand \ini{\operatorname{in}}
\newcommand \lex{\operatorname{lex}}
\newcommand \iv{\operatorname{iv}}
\newcommand \I{\mathcal{I}}
\newcommand{\chara}{\operatorname{char}}
\newcommand{\K}{\mathbb{K}}
\newtheorem{theorem}{Theorem}[section]
\newtheorem{definition}[theorem]{Definition}
\newtheorem{lemma}[theorem]{Lemma}
\newtheorem{proposition}[theorem]{Proposition}
\newtheorem{example}[theorem]{Example}
\newtheorem{obs}[theorem]{Observation}
\newtheorem{question}[theorem]{Question}
\newtheorem{remark}[theorem]{Remark}
\newtheorem{corollary}[theorem]{Corollary}
\newtheorem{notation}[theorem]{Notation}
\begin{document}
	\title[Regularity of powers of quadratic sequences]{Regularity of powers of quadratic sequences with applications to
		binomial ideals}
	\author[A. V. Jayanthan]{A. V. Jayanthan}
	\email{jayanav@iitm.ac.in}
	\author[Arvind Kumar]{Arvind Kumar}
	\email{arvkumar11@gmail.com}
	\author[Rajib Sarkar]{Rajib Sarkar}
	\email{rajib.sarkar63@gmail.com}
	\address{Department of Mathematics, Indian Institute of Technology
		Madras, Chennai, INDIA - 600036}

	\begin{abstract}
In this article, we obtain an upper bound for the Castelnuovo-Mumford
regularity of powers of an ideal generated by a homogeneous
quadratic sequence in a polynomial ring in terms of the
regularity of its related ideals and degrees of its
generators. As a consequence, we compute upper bounds for the
regularity of powers of several binomial ideals. We generalize
a result of Matsuda and Murai to show that the regularity of
$J^s_G$ is bounded below by $2s+\ell(G)-1$ for all $s \geq 1$,
where $J_G$ denotes the binomial edge ideal of a graph $G$ and
$\ell(G)$ is the length of a longest induced path in $G$. We
compute the regularity of powers of binomial edge ideals of
cycle graphs, star graphs, and balloon graphs explicitly.
Also, we give sharp bounds for the regularity of powers of
almost complete intersection binomial edge ideals and parity
binomial edge ideals.
\end{abstract}
\keywords{Quadratic sequence, Related ideal, Binomial edge ideal, Castelnuovo-Mumford regularity, $d$-sequence}
\thanks{AMS Subject Classification (2010): 13D02,13C13, 05E40}
\maketitle
\section{Introduction}
Huneke introduced the notion of $d$-sequence, in \cite{Hu82}, and
	proved that the symmetric algebra and Rees algebra of an ideal generated
	by a $d$-sequence in a Noetherian ring are isomorphic, \cite{Hu80} (see
	\cite{KNR91} for a simple proof). He used
	the theory of $d$-sequence to study the depth of powers of ideals in
	a Noetherian ring $R$, \cite{Hu82}. He generalized this notion to weak
	$d$-sequence and analyzed behavior of $R/I^n$, when $I$ is generated
	by a weak $d$-sequence, \cite{Hu81}. Raghavan further generalized the notion of weak $d$-sequence to quadratic sequence and
	studied the depth of $R/I^n$ when $I$ is generated by a quadratic
	sequence, \cite{KNR}. In this paper, we obtain an upper bound for the
	Castelnuovo-Mumford regularity of $R/I^n$ in terms of the regularity of related ideals, and
	degrees of the generators, where $R$ is a standard graded polynomial
	ring over a field $\K$ and $I$ is an ideal generated by a homogeneous 
	quadratic sequence,  Theorem \ref{reg-graded-filtration},
	Corollary \ref{reg-d-sequence}. To illustrate our result, we compute
	the regularity of powers of the defining ideal of a class of projective
	monomial curves, which in fact, is a binomial ideal generated by a
	quadratic sequence.
	
	Ever since Cutkosky, Herzog and Trung, in \cite{CHT}, and independently
	Kodiyalam, in \cite{Kod}, proved that if $I$ is a homogeneous ideal in
	a polynomial ring $R$, then $\reg(I^s) = as + b$ for $s \gg 0$, for
	some non-negative integers $a, b$, it has been a constant effort from
	the researchers to compute $a$ and $b$ for several classes of
	homogeneous ideals. They showed that $a$ is at most the maximum degree
	of a minimal homogeneous generator of $I$. It has remained a challenge
	to compute the constant term in the linear polynomial. During the past decade, there has been a lot of research activity in this direction.
	In particular, if $I(G)$ denotes the monomial edge ideal corresponding
	to a finite simple graph $G$, then researchers have obtained an
	upper bound for the constant term for all graphs and have computed
	the constant term for several subclasses of graphs
	(see \cite{BBH17, JS18} and the references therein). 
	While monomial ideals have received a lot of attention in this
	direction, there are not many such results for binomial ideals.
	Recently, Raicu computed the linear polynomial corresponding to the
	asymptotic regularity function for $p\times p$ minors of an $m \times
	n$ matrix, $m \geq n$, \cite{Rai18}.  As an application of our result,
	we get upper bounds for the regularity of powers of certain binomial
	ideals, namely, binomial edge ideals and parity binomial edge ideals.
	
	Let $G$ be a simple graph with the vertex set $[n]=\{1,\dots,n\}$ and
	the edge set $E(G)$. Let $S=\K[x_1,\dots,x_n,y_1,\dots,y_n]$ be the
	polynomial ring where $\K$ is an arbitrary field.  The binomial edge
	ideal corresponding to $G$, denoted by $J_G$, is the ideal generated
	by the set $\{x_iy_j-x_jy_i : i<j \text{ and } \{i,j\}\in E(G)\}$. The
	notion of binomial edge ideal was introduced by Herzog et al. in
	\cite{HH1} and independently by Ohtani in \cite{oh}. Another class of
	binomial ideals associated with finite simple graphs are the parity
	binomial edge ideals. For a graph $G$ on the vertex set $[n]$, the
	parity binomial edge ideal $\I_G$ is the ideal generated by the set
	$\{x_ix_j - y_iy_j : \{i, j\}\in E(G), i < j \} \subset S$. Kahle et
	al. introduced this notion, \cite{kah}, and studied its various
	properties.  
	
In the recent past, researchers have been trying to understand various
properties of these ideals and their relationship with combinatorial
properties of corresponding graphs. While there is some success in the
case of binomial edge ideals, the parity binomial edge ideals are
quite new and nothing much is known about them. One line of research
is to estimate the regularity of these ideals using combinatorial
invariants of corresponding graphs. In \cite{MM}, Matsuda and Murai
proved that $\ell(G) \leq \reg(S/J_G) \leq n-1$, where $\ell(G)$
denotes the length of a longest induced path in $G$. This bound, in
general, is a weak one and there are improved bounds for several
classes, (see for example
\cite{her2,JNR2,KMJCTA,AR3,AR2,RCJAA,MKM2018}). For some classes of
graphs, precise expressions for the regularity have also been
computed, (see for example \cite{EZ, JA1, Schenzel, Zafar}).  The
lower bound in the Matsuda-Murai bound for the regularity was a
consequence of a more general result, namely, if $H$ is an induced
subgraph of $G$, then $\beta_{i,j}(S/J_H) \leq \beta_{i,j}(S/J_G)$ for
all $i, j$. We generalize this result to all powers, that is,
$\beta_{i,j}(S/J_H^s) \leq \beta_{i,j}(S/J_G^s)$ for all $i, j$ and $s
\geq 1$, whenever $H$ is an induced subgraph of $G$, Proposition
\ref{induced-subgraph}. As an immediate consequence, we obtain a
general lower bound, namely, $2s + \ell(G) - 2 \leq \reg(S/J_G^s)$ for
all $s \geq 1$, Corollary \ref{lower-bound}.
	
Computing the regularity of powers of (parity) binomial edge ideals of
an arbitrary graph seems more challenging compared to the regularity
of powers of monomial edge ideals. Even in the case of simple classes
of graphs, the regularity of the powers of their binomial edge ideals
is not known. So, naturally one restricts the attention to important
subclasses.  In \cite{JAR1}, we studied the Rees algebra and first
graded Betti numbers of binomial edge ideals which are almost complete
intersections. We proved that almost complete intersection binomial
edge ideals are generated by $d$-sequence. Cutkosky, Herzog and Trung
proved that if $I$ is an ideal generated by a $d$-sequence of $n$
forms of the same degree $r$, then for all $s \geq n+1,$ $\reg(I^s) =
(s-n-1)r + \reg(I^{n+1})$, \cite[Corollary 3.8]{CHT}. This expression
depends on the number of generators of $I$. Moreover, in our
situation, computing the linear polynomial boils down to computing
$\reg(I^{n+1})$, which itself is challenging when the graph has a
large number of edges.  Note that a $d$-sequence is a quadratic
sequence. Moreover, in the case of ideals generated by $d$-sequence,
the computation of the related ideals becomes much simpler. Using the
upper bounds in Theorem \ref{reg-graded-filtration} and Corollary
\ref{reg-d-sequence}, we compute the regularity of powers of binomial
edge ideals of cycles, star graphs and balloon graphs. For other
almost complete intersection binomial edge ideals, we obtain bounds
for the regularity of their powers. 
	
\begin{theorem} Let $G$ be a finite simple graph and $J_G$ denote its
binomial edge ideal in the polynomial ring $S$.
\begin{enumerate}
	\item If $G = K_{1,n}$, then $\reg(S/J_G^s) = 2s$ for all $s \geq
		1$.
	\item If $G = C_n$, then $\reg(S/J_G^s) = 2s + n - 4$ for all $s
		\geq 1$.
	\item If $G$ is a tree such that $J_G$ is an almost complete
		intersection ideal, then 
		\[2s + \iv(G) - 2 \leq \reg(S/J_G^s) \leq 2s + \iv(G) - 1,\]
		for all $s \geq 1$, where $\iv(G)$ denotes
		the number of internal vertices of $G$.
			
	\item If $G$ is a unicyclic graph on $[n]$ 
		such that $J_G$ is an almost complete intersection ideal, then 
		\[2s + n - 5 \leq \reg(S/J_G^s) \leq 2s + n - 4\]
		for all $s \geq 1$.
\end{enumerate}
	\end{theorem}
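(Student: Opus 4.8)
The plan is to prove the lower and upper bounds by two separate mechanisms. Every lower bound will come from Corollary \ref{lower-bound}, which gives $2s + \ell(G) - 2 \leq \reg(S/J_G^s)$, so the task reduces to computing, or bounding below, the length $\ell(G)$ of a longest induced path in each class. Every upper bound will come from the regularity estimate for powers of $d$-sequences, Corollary \ref{reg-d-sequence}. To invoke it I first record, citing \cite{JAR1}, that in each of the relevant families $J_G$ is generated by a $d$-sequence (hence by a quadratic sequence); the only excluded case there, a unicyclic graph of girth $3$, does not arise in (2) for $n \geq 4$, nor in (3) (trees have no cycles), nor in (4) (girth $\geq 4$ is assumed). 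Since every generator of a binomial edge ideal has degree $2$, the linear term $2s$ in Corollary \ref{reg-d-sequence} is produced automatically, and the entire content of each upper bound is the constant term, which is governed by the regularities of the related ideals.

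For (1), the longest induced path in the star $K_{1,n}$ is leaf--center--leaf, so $\ell(K_{1,n}) = 2$ and the lower bound reads $\reg(S/J_G^s) \geq 2s$. For the matching upper bound I would compute the related ideals of $J_{K_{1,n}}$ explicitly; because the star is so rigid, these should reduce to a fixed linear ideal plus a previously understood binomial edge ideal, and feeding their regularities into Corollary \ref{reg-d-sequence} should yield $\reg(S/J_G^s) \leq 2s$, whence equality. For (2), deleting one vertex from $C_n$ leaves an induced path on $n-1$ vertices, so $\ell(C_n) = n-2$ and Corollary \ref{lower-bound} already gives the sharp lower bound $2s + n - 4$. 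For $n \geq 4$ the cycle is a unicyclic graph of girth $\geq 4$ whose binomial edge ideal is a $d$-sequence, so the upper bound $\reg(S/J_G^s) \leq 2s + n - 4$ is exactly statement (4) specialized to $C_n$, giving equality. The case $n = 3$, where $C_3 = K_3$ and $J_{K_3}$ is the ideal of $2\times 2$ minors of a generic $2\times 3$ matrix, is treated separately from the known regularity of powers of that determinantal ideal, which equals $2s - 1 = 2s + n - 4$.

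For (3) and (4) I would invoke the structural description of almost complete intersection binomial edge ideals from \cite{JAR1}, both to guarantee generation by a $d$-sequence and to describe the related ideals concretely. In the tree case the structure bounds the vertex degrees, which forces $\ell(G) \geq \iv(G)$ (typically with equality), turning Corollary \ref{lower-bound} into $\reg(S/J_G^s) \geq 2s + \iv(G) - 2$; in the unicyclic case the cycle together with its attached paths yields a long induced path, giving $\ell(G) \geq n-3$ and hence $\reg(S/J_G^s) \geq 2s + n - 5$.

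The upper bounds $2s + \iv(G) - 1$ and $2s + n - 4$ are where the main obstacle lies. The related ideals appearing in Corollary \ref{reg-d-sequence} are more complicated than $J_G$ itself, and one must show that each of them is, up to its linear part, again a binomial edge ideal (or a sum of such) whose regularity is controlled by $\iv(G)$ or by $n$. Establishing these regularity estimates for the related ideals uniformly across the family, and checking their compatibility with the degree data entering Corollary \ref{reg-d-sequence}, is the crux of the argument; once it is in place, the two bounds combine to give the four stated formulas.
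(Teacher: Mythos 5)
Your skeleton for the upper bounds (quadratic/$d$-sequence machinery reducing everything to the regularity of the related ideals) is the same as the paper's, but two points need repair. First, the lower bound in part (3) cannot be obtained from Corollary \ref{lower-bound} alone: for an almost complete intersection tree the longest induced path can be strictly shorter than $\iv(G)$. For instance, take the $T$-type tree obtained by attaching a path on $5$ vertices to the middle vertex of $P_7$; then $n=12$, $\iv(G)=9$, but every induced path misses an entire branch, so $\ell(G)=8$ and Corollary \ref{lower-bound} only gives $2s+6$, not $2s+\iv(G)-2=2s+7$. The paper instead deletes a neighbour $v$ of the degree-$3$ vertex, observes that $G\setminus v$ is a \emph{disjoint union of paths} so that $J_{G\setminus v}$ is a complete intersection of at least $n-3$ quadrics, and then combines the exact regularity of powers of complete intersections (\cite[Lemma 4.4]{STT}) with Proposition \ref{induced-subgraph}. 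That is, the lower bound comes from an induced subgraph that is a complete intersection but not a single path; your "every lower bound will come from Corollary \ref{lower-bound}" premise fails precisely here. (For parts (1), (2) and (4) the induced-path bound does suffice.)

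Second, on the upper bounds: Corollary \ref{reg-d-sequence} is not applicable to $K_{1,n}$, since its hypothesis that the first $n-1$ generators form a regular sequence fails badly for the star (those generators cut out an ideal of height $2$). Part (1) must be run through Theorem \ref{reg-graded-filtration} directly, identifying each related ideal $((f_{1,n+1},\dots,f_{i,n+1}):f_{i+1,n+1})+J_G$ as the binomial edge ideal of a complete graph with pendant edges via \cite[Theorem 3.7]{FM} and bounding its regularity by $2$. More generally, the step you describe as "the crux" -- showing that each related ideal is again a binomial edge ideal whose regularity is at most $\iv(G)+1$, resp.\ $n-3$ -- is where most of the paper's work lies: it requires the colon-ideal identification of \cite{FM}, regularity formulas for block graphs from \cite{JNR}, and the auxiliary computations for clique sums $C_n\cup_e K_m$ and the graphs $G_1$, $G_2$ (Propositions \ref{clique-sum-regularity}, \ref{g1}, \ref{g2}), none of which is supplied or reduced to a cited result in your outline. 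Also note that the paper does not deduce part (2) from part (4): the bare cycle is not in the structural family of part (4), and is handled directly by Corollary \ref{reg-d-sequence} using $\reg(S/J_{C_n})=n-2$; your separate treatment of $C_3$ is unnecessary since the same computation gives $2s-1$ there as well.
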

	
	Bolognini et al. proved that if $G$ is bipartite, then
	$J_G$ and $\I_G$ are isomorphic, \cite{dav}. Therefore, to study
	parity binomial edge ideals, we consider graphs containing an odd-cycle.
	For parity binomial edge ideals, we prove:
	\begin{theorem} For all $s \geq 1$,
		\begin{enumerate}
			\item if $n \geq 3$ is an odd integer, then $\reg(S/\I_{C_n}) = 2s + n -2$,
			\item if $G$ is a graph on $[n]$ obtained by adding an edge
			between an odd cycle and an internal vertex of a path, then $2s
			+ n -5 \leq \reg(S/\I_G^s) \leq 2s + n - 4$,
			\item if $G$ is either a balloon graph on $[n]$ having odd girth
			or a graph obtained by adding a chord in an odd cycle $C_n$,
			then $2s + n - 4 \leq \reg(S/\I_G^s) \leq 2s + n - 3$.
		\end{enumerate}
		
	\end{theorem}
	
\textbf{Acknowledgement.} The authors are thankful to K. N. Raghavan
for pointing us to several useful references on quadratic sequences
and suggestions on the computation of their related ideals.  The
National Board for Higher Mathematics, India partially supports the
first author through the project, No. 02011/23/2017/R\&D II/4501. The
second author is financially supported by the National Board for
Higher Mathematics, India. The University Grants Commission, India
financially supports the third author. We are also thankful to the
referee for suggesting a simpler proof of Proposition
\ref{clique-sum-regularity} for which we had a slightly lengthier
proof.

\section{Regularity of powers of quadratic sequence}
In this section, we study the regularity of powers of an ideal generated
by quadratic sequence. First, we recall the definition of quadratic
sequence from \cite{KNR}.
	
	Let $\Lambda$ be a finite poset. A subset $\Sigma \subseteq \Lambda$
	is said to be a \textit{poset ideal}
	if it satisfies the following property:
	\[\text{if }\sigma \in \Sigma  \text{ and }\lambda\in \Lambda \text{
		with }\lambda\leq \sigma, \text{ then }\lambda\in \Sigma. \]
	Let $R$ be a commutative Noetherian ring with unity and $
	\{u_{\lambda} : \lambda\in \Lambda \}$ be a set of elements of $R$
	indexed by $\Lambda$. For $\Sigma \subseteq \Lambda$, let $U_{\Sigma}$
	denote the ideal of $R$ generated by $\{u_{\sigma} : \sigma\in \Sigma
	\}$. Note that $U_{\emptyset}=(0 )$. Let $\Sigma$ be a poset ideal of
	$\Lambda$ and $\lambda\in \Lambda$. We say that $\lambda$ \textit{lies
		just above} $\Sigma$ if it satisfies the following:
	\begin{enumerate}[\rm i)]
		\item $\lambda \notin \Sigma$ and
		\item $\sigma\in \Sigma$, whenever $\sigma\in \Lambda$ and $\sigma
		<\lambda$.
	\end{enumerate}
	An element $\lambda\in \Lambda$ is said to \textit{lie inside or just
		above}
	$\Sigma$ if either $\lambda\in \Sigma$ or $\lambda$ lies just above
	$\Sigma$.
	\begin{definition} $($\cite[Definition 3.3]{KNR}$)$\label{qsdef}
		Let $\Lambda$ be a finite poset and $I\subset R$ be an ideal. Let
		``-'' denote images in $R/I$.
		A set of elements $\{u_{\lambda} : \lambda \in \Lambda \}\subseteq R$
		is said to be a quadratic sequence with respect to the ideal $I$ if
		for every pair $(\Sigma,\lambda)$, where $\Sigma$ is a poset ideal of
		$\Lambda$ and $\lambda$ lies inside or just above $\Sigma$, there
		exists a poset ideal $\Theta$ of $\Lambda$ such that 
		\begin{enumerate}
			\item ($\bar{U}_{\Sigma}:\bar{u}_{\lambda})\cap
			\bar{U}_{\Lambda}\subseteq \bar{U}_{\Theta}$,
			\item $u_{\lambda}U_{\Theta} \subseteq (U_{\Sigma}+I)U_{\Lambda}.$
		\end{enumerate}
		A set of elements $\{u_{\lambda}:\lambda \in \Lambda \}\subseteq R$ is
		said to be a quadratic sequence if it is a quadratic sequence with
		respect to the zero ideal. 
	\end{definition}
	We now recall some basic properties of quadratic sequences from
	\cite{KNR} which are required for our results.
	\begin{obs}$($\cite[Remark 3.4]{KNR}$)$
		\begin{enumerate}
			\item Let $I$ be an ideal of $R$. If $\{u_{\lambda}:\lambda
			\in \Lambda \}\subseteq R$ is a quadratic sequence with
			respect to $I$, then $\{\bar{u}_{\lambda}:\lambda \in \Lambda \}
			\subseteq R/I$ is also a quadratic sequence.
			\item If $\{u_{\lambda}:\lambda \in \Lambda \}\subseteq R$ is
			a quadratic sequence, then for any poset ideal $\Sigma
			\subset \Lambda$, $\{\bar{u}_{\lambda}:\lambda \in \Lambda\setminus \Sigma \}\subseteq R/U_{\Sigma}$ is a quadratic sequence.
		\end{enumerate}
	\end{obs}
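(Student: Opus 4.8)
The plan is to observe that in both parts the \emph{same} poset ideal $\Theta$ (suitably restricted) witnesses the quadratic sequence condition, so that each statement reduces to careful bookkeeping of how ideals, colons and products behave under the relevant quotient map. I will treat the two parts separately but with a common mechanism.

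For part (1), write $\bar{\cdot}$ for reduction modulo $I$. I first note two elementary facts: for any $\Sigma \subseteq \Lambda$ the ideal of $R/I$ generated by $\{\bar u_\sigma : \sigma \in \Sigma\}$ is precisely the image $\bar U_\Sigma$, and $\overline{U_\Sigma U_\Lambda} = \bar U_\Sigma\,\bar U_\Lambda$ since products of ideals are generated by products of generators. Now fix a pair $(\Sigma,\lambda)$ with $\Sigma$ a poset ideal of $\Lambda$ and $\lambda$ lying inside or just above $\Sigma$, and let $\Theta$ be the poset ideal provided by Definition \ref{qsdef} for the quadratic sequence $\{u_\lambda\}$ with respect to $I$. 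Condition (1) of Definition \ref{qsdef} reads $(\bar U_\Sigma : \bar u_\lambda)\cap \bar U_\Lambda \subseteq \bar U_\Theta$, which is exactly the first defining condition for $\{\bar u_\lambda\}$ to be a quadratic sequence in $R/I$. For the second condition I apply the quotient map to $u_\lambda U_\Theta \subseteq (U_\Sigma + I)U_\Lambda$: since $I U_\Lambda \subseteq I$ maps to zero, the right-hand side maps onto $\bar U_\Sigma\,\bar U_\Lambda$, giving $\bar u_\lambda\,\bar U_\Theta \subseteq \bar U_\Sigma\,\bar U_\Lambda$. Thus the same $\Theta$ works, so $\{\bar u_\lambda\}$ is a quadratic sequence in $R/I$.

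For part (2), set $\Lambda' = \Lambda\setminus \Sigma$; since $\Sigma$ is a poset ideal, $\Lambda'$ is an up-set and inherits the order. The key combinatorial fact I will establish is that $\Sigma' \mapsto \Sigma\cup\Sigma'$ is an order-preserving bijection from poset ideals of $\Lambda'$ onto poset ideals of $\Lambda$ containing $\Sigma$, with inverse $T \mapsto T\setminus\Sigma$; I check directly that $\Sigma\cup\Sigma'$ is downward closed in $\Lambda$ and that if $\lambda\in\Lambda'$ lies inside or just above $\Sigma'$ in $\Lambda'$, then $\lambda$ lies inside or just above $\Sigma\cup\Sigma'$ in $\Lambda$ (the only point needing attention is that every $\mu<\lambda$ in $\Lambda$ lands in $\Sigma\cup\Sigma'$, which follows by splitting on whether $\mu\in\Sigma$). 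Writing $\bar{\cdot}$ now for reduction modulo $U_\Sigma$, I record that $\overline{U_{\Sigma\cup\Sigma'}} = \overline{U_{\Sigma'}}$ and $\overline{U_\Lambda} = \overline{U_{\Lambda'}}$, and that these are exactly the ideals generated by $\{\bar u_\sigma : \sigma\in\Sigma'\}$ and $\{\bar u_\sigma : \sigma\in\Lambda'\}$ in $R/U_\Sigma$. Given a pair $(\Sigma',\lambda)$ for $\Lambda'$, I apply the quadratic sequence property of $\{u_\lambda\}$ (with respect to the zero ideal) to the corresponding pair $(\Sigma\cup\Sigma',\lambda)$ to obtain a poset ideal $\Theta$ of $\Lambda$, and set $\Theta' = \Theta\setminus\Sigma$, a poset ideal of $\Lambda'$. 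Since $\Theta\cap\Sigma \subseteq \Sigma$, one has $U_\Theta + U_\Sigma = U_{\Theta'} + U_\Sigma$, hence $\overline{U_\Theta} = \overline{U_{\Theta'}}$. The second condition then follows by reducing $u_\lambda U_\Theta \subseteq U_{\Sigma\cup\Sigma'}U_\Lambda$ modulo $U_\Sigma$, exactly as in part (1). For the first condition I take $\bar r$ in $(\overline{U_{\Sigma\cup\Sigma'}} : \bar u_\lambda)\cap \overline{U_\Lambda}$; using $U_\Sigma \subseteq U_{\Sigma\cup\Sigma'}$ a representative $r$ satisfies $r u_\lambda \in U_{\Sigma\cup\Sigma'}$ and $r\in U_\Lambda$, whence $r\in U_\Theta$ by the defining condition in $R$, so $\bar r\in \overline{U_\Theta} = \overline{U_{\Theta'}}$.

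The reductions modulo $I$ (respectively $U_\Sigma$) are straightforward once the ideal-theoretic dictionary above is in place; the only place demanding genuine care is the poset bookkeeping in part (2) — verifying that the bijection $\Sigma'\leftrightarrow \Sigma\cup\Sigma'$ preserves the ``lies inside or just above'' relation and that $U_\Sigma$ is absorbed into $U_{\Sigma\cup\Sigma'}$ so that no extraneous terms appear in the colon ideal. I expect this to be the main, though mild, obstacle.
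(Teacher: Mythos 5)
Your proof is correct, and it is the standard verification: in both parts the same witness $\Theta$ supplied by Definition~\ref{qsdef} transfers to the quotient (in part (2) via $\Theta' = \Theta\setminus\Sigma$ together with the bijection $\Sigma' \leftrightarrow \Sigma\cup\Sigma'$ between poset ideals of $\Lambda\setminus\Sigma$ and poset ideals of $\Lambda$ containing $\Sigma$, preserving the ``lies inside or just above'' relation), with the cross-terms involving $I$, respectively $U_\Sigma$, dying in the quotient. The paper offers no argument of its own --- the Observation is quoted from \cite[Remark 3.4]{KNR} without proof --- so your write-up simply fills in the routine verification the citation leaves to the reader, and it does so accurately, including the one genuinely delicate point (that $(\bar U_{\Sigma\cup\Sigma'}:\bar u_\lambda)\cap \bar U_\Lambda$ pulls back into $(U_{\Sigma\cup\Sigma'}:u_\lambda)\cap U_\Lambda$ because $U_\Sigma \subseteq U_{\Sigma\cup\Sigma'} \subseteq U_\Lambda$).
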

	\begin{lemma}$($\cite[Corollaries 3.7 and 5.2]{KNR}$)$\label{tech-lemma}
		Let $\{u_{\lambda}:\lambda \in \Lambda \}\subseteq R$ be a quadratic sequence. Then
		\begin{enumerate}
			\item for every poset ideal $\Sigma$ of $\Lambda$,
			$U_{\Sigma}\cap U^s_{\Lambda}=U_{\Sigma}U_{\Lambda}^{s-1}$
			for any integer $s\geq 1$ and
			\item for any minimal element $\alpha$ of $\Lambda$,
			$\{\bar{u}_{\lambda}:\lambda \in \Lambda \}\subseteq R/(0:u_{\alpha})$ is a quadratic sequence.
		\end{enumerate}
	\end{lemma}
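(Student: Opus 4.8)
The two parts are logically intertwined, so the plan is to treat part (1) first and feed its minimal-element colon computation into part (2). Since the sequence is quadratic with respect to the zero ideal, the defining conditions of Definition \ref{qsdef} read: for each admissible pair $(\Sigma,\lambda)$ there is a poset ideal $\Theta$ with $(U_\Sigma : u_\lambda)\cap U_\Lambda \subseteq U_\Theta$ and $u_\lambda U_\Theta \subseteq U_\Sigma U_\Lambda$. The engine behind everything is the combination of these two: if $r \in U_\Lambda$ and $r u_\lambda \in U_\Sigma$, then $r \in (U_\Sigma:u_\lambda)\cap U_\Lambda \subseteq U_\Theta$, hence $r u_\lambda \in u_\lambda U_\Theta \subseteq U_\Sigma U_\Lambda$ --- that is, membership in $U_\Sigma$ is automatically upgraded to membership in $U_\Sigma U_\Lambda$. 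This ``one power for free'' upgrade is what drives the inductions below.

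For part (1) the inclusion $U_\Sigma U_\Lambda^{s-1} \subseteq U_\Sigma \cap U_\Lambda^s$ is immediate from $U_\Sigma \subseteq U_\Lambda$, so the work is the reverse inclusion, which I would prove by induction on the pair $(|\Sigma|, s)$; the cases $\Sigma = \emptyset$ and $s = 1$ are trivial. Writing $\Sigma = \Sigma' \cup \{\lambda\}$ with $\lambda$ maximal in $\Sigma$ (so $\lambda$ lies just above $\Sigma'$), any $x \in U_\Sigma \cap U_\Lambda^s$ decomposes as $x = z + r u_\lambda$ with $z \in U_{\Sigma'}$ and $r \in R$. Reducing modulo $U_{\Sigma'}$ and using that $\{\bar u_\mu : \mu \in \Lambda \setminus \Sigma'\}$ is a quadratic sequence in $R/U_{\Sigma'}$ (second part of the preceding Observation), in which $\lambda$ has become minimal, the problem reduces to the colon-of-powers statement $(\overline{U_\Lambda^s} : \bar u_\lambda) \subseteq \overline{U_\Lambda^{s-1}} + (0 : \bar u_\lambda)$. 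Granting this, $r u_\lambda$ can be rewritten as an element of $u_\lambda U_\Lambda^{s-1} + U_{\Sigma'}$, so that $x \in U_{\Sigma'} + u_\lambda U_\Lambda^{s-1}$; the $U_{\Sigma'}$-component, being $x$ minus an element of $U_\Lambda^s$, lies in $U_{\Sigma'}\cap U_\Lambda^s = U_{\Sigma'}U_\Lambda^{s-1}$ by the induction on $|\Sigma|$, while $u_\lambda U_\Lambda^{s-1} \subseteq U_\Sigma U_\Lambda^{s-1}$, finishing the step. The hard part is precisely this colon-of-powers identity for the now-minimal $\lambda$: it is proved by feeding the ``one power for free'' upgrade into an induction on $s$, repeatedly trading a factor of $u_\lambda$ and absorbing the annihilator $(0:\bar u_\lambda)$, and it is here that the quotient by an annihilator of a minimal element (the content of part (2)) surfaces naturally.

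For part (2) I would not re-verify the axioms from scratch but instead show that $\{u_\lambda : \lambda \in \Lambda\}$ is a quadratic sequence \emph{with respect to the ideal} $(0 : u_\alpha)$; the claim then follows from the first part of the preceding Observation applied with $I = (0:u_\alpha)$. The starting point is that for minimal $\alpha$ the pair $(\emptyset, \alpha)$ is admissible, so Definition \ref{qsdef} yields a poset ideal $\Theta$ with $(0 : u_\alpha)\cap U_\Lambda = U_\Theta$ and $u_\alpha U_\Theta = 0$, pinning down how the annihilator meets the filtration by powers of $U_\Lambda$. To verify the two conditions of Definition \ref{qsdef} for an arbitrary admissible pair $(\Sigma, \lambda)$ over the quotient, I would take the poset ideal $\Theta'$ supplied by the original (zero-ideal) structure: condition (2) only gets easier, since its right-hand side enlarges from $U_\Sigma U_\Lambda$ to $(U_\Sigma + (0:u_\alpha))U_\Lambda$, while condition (1), which now reads $(U_\Sigma + (0:u_\alpha) : u_\lambda)\cap(U_\Lambda + (0:u_\alpha)) \subseteq U_{\Theta'} + (0:u_\alpha)$, must be controlled using the identity $(0:u_\alpha)\cap U_\Lambda = U_\Theta$. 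The main obstacle is exactly this last containment: one must show that enlarging every ideal by $(0:u_\alpha)$ creates no colon elements outside $U_{\Theta'} + (0:u_\alpha)$, and it is the minimality of $\alpha$ --- through the tight description of $(0:u_\alpha)\cap U_\Lambda$ --- that makes the annihilator compatible with the poset filtration. I expect to need this compatibility in the sharpened form that $(0:u_\alpha)$ sits transversally to all the $U_\Sigma$, which is the deeper input drawn from the later results of \cite{KNR}.
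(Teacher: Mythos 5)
You should note first that the paper contains no proof of Lemma \ref{tech-lemma}: both parts are imported verbatim from \cite{KNR} (Corollaries 3.7 and 5.2), so the benchmark is Raghavan's development, in which part (1) is the culmination of the hard technical core of his Section 3, while part (2) is a comparatively formal Section 5 consequence. Measured against that, your part (1) has a genuine gap exactly where the real content lies. Your reduction (peel a maximal $\lambda$ off $\Sigma$, pass to $R/U_{\Sigma'}$ where $\lambda$ is minimal, fix up the $U_{\Sigma'}$-component by induction on $|\Sigma|$) is clean and correct, but it funnels everything into the colon-of-powers inclusion $(\bar U_\Lambda^s : \bar u_\lambda) \subseteq \bar U_\Lambda^{s-1} + (0:\bar u_\lambda)$ for a minimal element, which you only gesture at. This inclusion is \emph{not} a formal consequence of your ``one power for free'' engine: axiom (1) of Definition \ref{qsdef} controls $(U_\Sigma : u_\lambda)$ only after intersecting with $U_\Lambda$, whereas the colon statement concerns arbitrary ring elements. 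Running your suggested induction on $s$ (trade one factor of $u_\alpha$) reduces it to showing $p \in U_\Lambda^{s-2}$, $pu_\alpha \in U_\Lambda^s$ implies $p \in U_\Lambda^{s-1} + (0:u_\alpha)$ --- a statement of the same depth as part (1) itself, and the known proof requires the simultaneous induction over the poset and over $s$ (through auxiliary statements such as $(U_\Sigma : u_\lambda)\cap U_\Lambda^s \subseteq U_\Theta U_\Lambda^{s-1}$) that constitutes Raghavan's actual argument. Worse, in \cite{KNR} this colon identity lives \emph{downstream} of Corollary 3.7, so your proposed order of deduction is circular against the only proof on record unless you supply an independent induction, which you do not.

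Your diagnosis of part (2) is also off, in an instructive way: it is the \emph{easy} part, provable directly from the axioms, with no transversality input and no dependence on part (1). Your plan (show the sequence is quadratic with respect to $I = (0:u_\alpha)$, then apply Observation (1)) is the right move, but keeping the poset ideal $\Theta'$ attached to the original pair $(\Sigma,\lambda)$ does not verify condition (1): an element $v$ with $vu_\lambda \in U_\Sigma + (0:u_\alpha)$ need not lie in $(U_\Sigma : u_\lambda)$. The correct choice is the following. The pair $(\emptyset,\alpha)$ is admissible precisely because $\alpha$ is minimal, and it yields a poset ideal $\Theta_0$ with $(0:u_\alpha)\cap U_\Lambda = U_{\Theta_0}$ and $u_\alpha U_{\Theta_0} = (0)$, as you observed. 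Now given an admissible pair $(\Sigma,\lambda)$, note that $\Sigma\cup\Theta_0$ is again a poset ideal with $\lambda$ lying inside or just above it, and let $\Theta_1$ be the poset ideal supplied by the original (zero-ideal) axioms for the \emph{enlarged} pair $(\Sigma\cup\Theta_0,\lambda)$. If $v \in U_\Lambda$ and $vu_\lambda \in U_\Sigma + (0:u_\alpha)$, then $vu_\lambda - w \in (0:u_\alpha)\cap U_\Lambda = U_{\Theta_0}$ for some $w \in U_\Sigma$, hence $v \in (U_{\Sigma\cup\Theta_0} : u_\lambda)\cap U_\Lambda \subseteq U_{\Theta_1}$; and $u_\lambda U_{\Theta_1} \subseteq U_{\Sigma\cup\Theta_0}U_\Lambda \subseteq (U_\Sigma + (0:u_\alpha))U_\Lambda$, verifying both conditions modulo $I$. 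So part (2) closes elementarily and independently, contrary to your claim that its content ``surfaces naturally'' inside part (1) and needs deeper results of \cite{KNR}; the unresolved hard kernel of your proposal is, and remains, the Section 3 induction behind part (1).
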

\textit{Related ideals} help to understand the structure of the
ideals generated by quadratic sequence and their powers. We recall
its definition here.
	\begin{definition} $($\cite[Definition 5.3]{KNR}$)$ 
		Let $\{u_{\lambda}:\lambda \in \Lambda \}\subseteq R$ be a quadratic
		sequence.  An ideal $J\subseteq R$ is said to be a related ideal to
		the quadratic sequence if $J=U_{\Lambda}$ or
		$J=(U_{\Sigma}:u_{\lambda})+U_{\Lambda}$ for some pair
		$(\Sigma,\lambda)$, where $\Sigma$ is a poset ideal of $\Lambda$ and
		$\lambda$ lies inside or just above $\Sigma$.
	\end{definition}
	
	In the following, we separate out a result from the proof
	of Theorem 5.4 of \cite{KNR} which is required for the main theorem in
	this section.
	
	\begin{lemma}\label{tech-lemma1}
		Let $\{u_{\lambda}:\lambda \in \Lambda \}\subseteq R$ be a quadratic
		sequence and $\alpha$ be a minimal element of $\Lambda$. Let $\Sigma$
		be a poset ideal of $\Lambda$ and $\lambda\in \Lambda$ lies inside or
		just above $\Sigma$.  Then
		$((U_{\Sigma}+(0:u_{\alpha})):u_{\lambda})+U_{\Lambda}$ is a related
		ideal to the quadratic sequence $\{u_{\lambda} : \lambda\in \Lambda
		\}$.
	\end{lemma}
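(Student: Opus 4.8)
The plan is to show that the ideal $\bigl((U_{\Sigma}+(0:u_{\alpha})):u_{\lambda}\bigr)+U_{\Lambda}$ is a related ideal by reducing modulo the quadratic sequence $\{\bar u_\lambda : \lambda \in \Lambda\} \subseteq R/(0:u_\alpha)$, which is again a quadratic sequence by part (2) of Lemma~\ref{tech-lemma}. The key observation is that forming the related ideal commutes with passing to this quotient, so I would first rewrite the claimed ideal in terms of the bar notation: if $\pi\colon R\to R/(0:u_\alpha)$ denotes the quotient map, then $\bigl((U_{\Sigma}+(0:u_\alpha)):u_\lambda\bigr)$ is precisely the preimage $\pi^{-1}\bigl(\bar U_{\Sigma}:\bar u_\lambda\bigr)$, since elements of $(0:u_\alpha)$ are absorbed into the ideal being saturated. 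Thus the whole expression equals $\pi^{-1}\bigl((\bar U_{\Sigma}:\bar u_\lambda)+\bar U_{\Lambda}\bigr)$.

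First I would verify the pair $(\Sigma,\lambda)$ remains admissible in the quotient: since $\Sigma$ is a poset ideal of $\Lambda$ and $\lambda$ lies inside or just above $\Sigma$, the same combinatorial relation persists for the quadratic sequence $\{\bar u_\lambda\}$ over $R/(0:u_\alpha)$, because the poset $\Lambda$ and the poset-ideal structure are unchanged by the quotient. Hence $(\bar U_{\Sigma}:\bar u_\lambda)+\bar U_{\Lambda}$ is, by the very definition of a related ideal, a related ideal to the quadratic sequence $\{\bar u_\lambda\}\subseteq R/(0:u_\alpha)$.

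Next I would transport this back to $R$. The correspondence theorem for ideals containing $(0:u_\alpha)$ gives a bijection between ideals of $R/(0:u_\alpha)$ and ideals of $R$ containing $(0:u_\alpha)$; under this bijection a related ideal of the quotient sequence pulls back to an ideal of $R$ of the form $\bigl((U_{\Sigma}:u_\lambda)+(0:u_\alpha)\bigr)+U_{\Lambda}$, which is exactly our target expression after absorbing $(0:u_\alpha)$ into the colon as noted above. The content of the lemma is therefore that the notion of ``related ideal'' is compatible with the reduction in Lemma~\ref{tech-lemma}(2); this is what the authors extracted from the proof of \cite[Theorem 5.4]{KNR}.

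The main obstacle I anticipate is purely bookkeeping: one must check carefully that $\bigl((U_{\Sigma}+(0:u_\alpha)):u_\lambda\bigr) = \pi^{-1}(\bar U_{\Sigma}:\bar u_\lambda)$, i.e. that saturating by $u_\lambda$ after adjoining the annihilator $(0:u_\alpha)$ corresponds exactly to saturating the image $\bar u_\lambda$ in the quotient ring. This requires showing that $r u_\lambda \in U_{\Sigma}+(0:u_\alpha)$ if and only if $\bar r\,\bar u_\lambda \in \bar U_{\Sigma}$, which is immediate once one notes that the image of $U_{\Sigma}+(0:u_\alpha)$ in $R/(0:u_\alpha)$ is $\bar U_{\Sigma}$. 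Since the definition of related ideal permits \emph{any} admissible pair $(\Sigma',\lambda')$ for the quotient sequence, and our construction produces such a pair, no genuine new inequality or vanishing result is needed beyond this identification.
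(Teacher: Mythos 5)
Your argument establishes only that $((U_{\Sigma}+(0:u_{\alpha})):u_{\lambda})+U_{\Lambda}$ is the \emph{preimage under $\pi$ of a related ideal of the quotient sequence} $\{\bar u_{\lambda}\}\subseteq R/(0:u_{\alpha})$ --- and that much is already carried out explicitly in the proof of Theorem~\ref{graded-filtration}, where this lemma is then invoked. What the lemma actually asserts is something stronger and different: that this preimage is a related ideal of the \emph{original} sequence $\{u_{\lambda}\}\subseteq R$, i.e.\ by Definition~\cite[Definition 5.3]{KNR} it must equal $U_{\Lambda}$ or $(U_{\Sigma'}:u_{\lambda'})+U_{\Lambda}$ for some poset ideal $\Sigma'$ of $\Lambda$ and some $\lambda'$ lying inside or just above $\Sigma'$, with the colon computed in $R$ and \emph{without} the extra summand $(0:u_{\alpha})$. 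Your closing sentence, that ``the definition of related ideal permits any admissible pair for the quotient sequence,'' conflates related ideals of the quotient sequence with related ideals of the original one; the whole point of the lemma is to show these preimages do not leave the original family. This matters downstream: the bound in Theorem~\ref{reg-graded-filtration} is a maximum over related ideals of the original sequence, and without this lemma the induction would produce an ever-growing recursively defined family of ideals instead.

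Concretely, the missing content is an identity of the form $((U_{\Sigma}+(0:u_{\alpha})):u_{\lambda})+U_{\Lambda}=(U_{\Sigma'}:u_{\lambda'})+U_{\Lambda}$ for a suitable admissible pair $(\Sigma',\lambda')$, and proving it requires the quadratic sequence axioms (conditions (1) and (2) of Definition~\ref{qsdef}) in an essential way --- note that your argument never uses them except to quote Lemma~\ref{tech-lemma}(2), which is a sign that the substance has been bypassed. (There is also a local slip: in your third paragraph you rewrite $((U_{\Sigma}+(0:u_{\alpha})):u_{\lambda})$ as $(U_{\Sigma}:u_{\lambda})+(0:u_{\alpha})$; only the containment $\supseteq$ of the former in terms of the latter holds in general, although your earlier identification of the preimage $\pi^{-1}(\bar U_{\Sigma}:\bar u_{\lambda})$ was correct.) The paper itself does not reprove this lemma but extracts it from the proof of \cite[Theorem 5.4]{KNR}, where the required manipulation with the axioms is carried out; to make your write-up a proof you would need to reproduce that step rather than restate the conclusion.
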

	One of the important aspects in the study of powers of ideals generated
	by quadratic sequence is the existence of a filtration with some nice properties.  We now prove a graded version of this result,
	\cite[Theorem 5.4]{KNR}. The proof is similar to that of the original
	result. We include it here for the sake of completeness.
	
	\begin{theorem}\label{graded-filtration}
		Let $R = \oplus_{n\geq 0}R_n$ be a graded $R_0$-algebra, where $R_0$
		is a Noetherian ring. Let $\Lambda$
		be a finite poset and 
		$\{u_{\lambda}:\lambda \in \Lambda \}\subseteq R$ be a set of
		homogeneous elements of $R$ with
		$\deg(u_{\lambda})=d_{\lambda}>0$. Set $d=\max\{
		d_{\lambda}: {\lambda \in \Lambda}\}$.  If $\{u_{\lambda}:\lambda \in \Lambda \}$ is a
		quadratic sequence, then for every $s\geq 1$, there exists a graded
		filtration of $R/U_{\Lambda}^s$ 
		\[R/U_{\Lambda}^s=M_0\supseteq M_1\supseteq \cdots
		\supseteq M_k=(0)\]
		such that for every $0\leq i\leq k-1$, there
		exists a related ideal $V_i$ and $0\leq d_i\leq d(s-1)$ with 
		$M_i/M_{i+1}\simeq [R/V_i](-d_i)$. 
	\end{theorem}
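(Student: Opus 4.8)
The plan is to construct the filtration by induction on $s$ together with a nested induction on the poset $\Lambda$, peeling off one minimal element at a time. Since the excerpt tells me the proof mirrors that of \cite[Theorem 5.4]{KNR}, the essential new content is that every module in the filtration is graded and that the twist $d_i$ is controlled by $d(s-1)$; the isomorphism-of-subquotients part is inherited from the non-graded argument, so I mainly need to track degrees carefully.

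Let me sketch the combinatorial skeleton.

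\begin{proof}[Proof sketch]
We argue by induction on $s$. The base case $s=1$ is immediate: take $M_0 = R/U_\Lambda$ and $M_1 = (0)$, so that $M_0/M_1 = R/U_\Lambda = [R/V_0](-d_0)$ with $V_0 = U_\Lambda$ and $d_0 = 0$, which lies in the allowed range since $d(s-1)=0$.

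Now assume $s \geq 2$ and that the result holds for $s-1$. Fix a minimal element $\alpha \in \Lambda$ of degree $d_\alpha \leq d$, and consider the short exact sequence of graded modules induced by multiplication by $u_\alpha$,
\[
0 \longrightarrow \bigl[R/(U_\Lambda^s : u_\alpha)\bigr](-d_\alpha) \xrightarrow{\,\cdot u_\alpha\,} R/U_\Lambda^s \longrightarrow R/\bigl(U_\Lambda^s + (u_\alpha)\bigr) \longrightarrow 0.
\]
Here the map is homogeneous of degree $d_\alpha$, forcing the twist $(-d_\alpha)$ on the source. It now suffices to produce graded filtrations of the two outer modules, each of whose subquotients is a twist of $R/V$ for a related ideal $V$, with the twists bounded by $d(s-1)$. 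For the quotient term, Lemma \ref{tech-lemma}(1) gives $U_\Lambda^s + (u_\alpha) = \cdots$, allowing one to rewrite $R/(U_\Lambda^s + (u_\alpha))$ as a module over $R/(u_\alpha)$ presented by a quadratic sequence on fewer elements (using part (2) of the Observation), to which the inner induction on $|\Lambda|$ applies. For the kernel term one uses the identity $(U_\Lambda^s : u_\alpha) = (U_\Lambda^{s-1} : u_\alpha) + U_\Lambda^{s-1}$-type relations coming from the quadratic sequence hypothesis, together with Lemma \ref{tech-lemma1}, which is precisely the statement that the colon ideals arising this way are related ideals. Splicing the two filtrations together with the short exact sequence yields the desired filtration of $R/U_\Lambda^s$.
\end{proof}

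The degree bookkeeping is the part I expect to require the most care, and it is the genuinely new ingredient beyond \cite{KNR}. Each time I pass through the multiplication-by-$u_\alpha$ map I introduce a twist of $d_\alpha \leq d$, and the induction on $s$ replaces an $s$-th power filtration by an $(s-1)$-st power filtration whose twists are bounded by $d(s-2)$ by the inductive hypothesis; adding one more factor of $u_\alpha$ (degree $\leq d$) keeps the total twist at most $d(s-1)$. The main obstacle is checking that the intermediate subquotients produced when resolving $R/(U_\Lambda^s + (u_\alpha))$ really do appear as $R/V_i$ for \emph{related} ideals $V_i$ and not merely as quotients by arbitrary colon ideals; this is exactly where Lemma \ref{tech-lemma1} and the stability of the quadratic-sequence property under passing to $R/U_\Sigma$ and $R/(0:u_\alpha)$ (the Observation and Lemma \ref{tech-lemma}(2)) must be invoked in the right order so that each colon ideal that arises is certified to be of the form $(U_\Sigma : u_\lambda) + U_\Lambda$.
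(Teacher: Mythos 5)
Your proposal follows essentially the same route as the paper: peel off a minimal element $\alpha$, split $R/U_\Lambda^s$ into the quotient $R/(u_\alpha,U_\Lambda^s)$ (handled by induction on $|\Lambda|$ over $R/(u_\alpha)$) and the submodule $(u_\alpha,U_\Lambda^s)/U_\Lambda^s \simeq [R/((0:u_\alpha)+U_\Lambda^{s-1})](-d_\alpha)$ (handled by induction on $s$ over $R/(0:u_\alpha)$ via Lemma \ref{tech-lemma}(2) and Lemma \ref{tech-lemma1}), with the same degree count $d(s-2)+d_\alpha \leq d(s-1)$. The only imprecision is your placeholder identity for the kernel term: the precise statement, coming from Lemma \ref{tech-lemma}(1) in the form $(u_\alpha)\cap U_\Lambda^s = u_\alpha U_\Lambda^{s-1}$, is $(U_\Lambda^s : u_\alpha) = (0:u_\alpha)+U_\Lambda^{s-1}$, which is exactly what lets the induction on $s$ apply to the quadratic sequence in $R/(0:u_\alpha)$.
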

	
	\begin{proof}
		We prove the assertion by induction on $|\Lambda|+s$. If $s=1$, then
		$(0)\subseteq R/U_{\Lambda}$ is the required filtration. Assume that $s\geq
		2$. Let $|\Lambda|=1$. Set $\Lambda=\{\alpha\}$. Then, for any $s \geq 2$,
		\begin{eqnarray}\label{filtration1}
			R/(u^s_{\alpha})\supseteq (u_{\alpha})/(u^s_{\alpha})\supseteq
			\dots \supseteq
			(u_\alpha^{s-1})/(u^s_{\alpha})\supseteq (0)
		\end{eqnarray}
		is a filtration of $R/(u^s_{\alpha})$.
		Take $\Sigma = \emptyset$, then $\alpha$ lies just above $\Sigma$. Hence, \ref{qsdef}(1) and (2) translates to
		$(0 : u_\alpha) \cap (u_\alpha) \subseteq U_\Theta$ and $u_\alpha U_\Theta =
		(0)$ for some poset ideal $\Theta$. If $\Theta = \{\alpha\}$,
		then $u_\alpha^2 = 0$. Therefore, $u_\alpha \in (0 : u_\alpha)$, and
		hence, $R/(0: u_\alpha) = R/( (0 : u_\alpha)+(u_\alpha)) \cong
		(u_\alpha)$. Since both $(u_\alpha)$ and $(0 : u_\alpha) + (u_\alpha)$
		are related ideals, $R \supset (u_\alpha) \supset (0)$ is the
		required filtration. Now, suppose $\Theta = \emptyset$. Then, $(0 : u_\alpha) \cap (u_\alpha)
		= (0)$. Let $k \geq 2$ and $a
		u_\alpha^k = 0$. Then, $a u_\alpha^{k-1} \in (0 : u_\alpha) \cap
		(u_\alpha) = 0$. Hence, $a \in (0 : u_\alpha^{k-1})$. Therefore, $(0 :
		u_\alpha^k) = (0 : u_\alpha^{k-1})$.
		
		Now, we show that $\frac{(u^k_{\alpha})}{(u^{k+1}_{\alpha})}\simeq
		\frac{(u^{k-1}_{\alpha})}{(u^k_{\alpha})}(-d_\alpha)$ for
		$k\geq 2$. Consider $\mu_{u_\alpha} :  (u_\alpha^{k-1}) \rightarrow
		\frac{(u^k_{\alpha})}{(u^{k+1}_{\alpha})},$ the multiplication by
		$u_\alpha$.
		Let $a \in (u_\alpha^{k-1})$ be such that $au_\alpha \in
		(u_\alpha^{k+1})$. Write $a = fu_{\alpha}^{k-1}$ and 
		$au_{\alpha} =gu_{\alpha}^{k+1}$ for some $f,g\in R$. 
		Then, for $k \geq 2$, $(f-gu_{\alpha})\in (0:u_{\alpha}^k)=(0:u_{\alpha}^{k-1})$ and so
		$fu_{\alpha}^{k-1}\in (u_{\alpha}^{k})$. Therefore, for $k\geq 2$, 
		\[
		\frac{(u^k_{\alpha})}{(u^{k+1}_{\alpha})}\simeq
		\frac{(u^{k-1}_{\alpha})}{(u^k_{\alpha})}(-d_{\alpha})\simeq
		\dots \simeq \frac{(u_{\alpha
			})}{(u^2_{\alpha})}(-(k-1)d_{\alpha})\simeq
		\frac{R}{((0:u_{\alpha})+(u_{\alpha}))}(-kd_{\alpha}),\]
		where the last isomorphism is obtained by proving that the kernel of
		the multiplication mapping from $R$ to $(u_{\alpha})/(u_{\alpha}^2)$
		is $((0:u_{\alpha})+(u_{\alpha})).$ Note that
		$(0:u_{\alpha})+(u_{\alpha})$ is a related ideal, and hence, the 
		filtration (\ref{filtration1}) satisfies the required conditions. This
		completes the case $|\Lambda| = 1$.
		
		Now, assume that $|\Lambda|\geq
		2$ and $s\geq 2$. Let $\alpha\in \Lambda$ be a minimal element.
		Consider the filtration
		$R/U_{\Lambda}^s\supseteq (u_{\alpha},U_{\Lambda}^s)/U_{\Lambda}^s
		\supseteq (0).$ 
		It follows from Lemma \ref{tech-lemma}(1) that
		$\frac{(u_{\alpha},U_{\Lambda}^s)}{(U_{\Lambda}^s)}\simeq
		\frac{(u_{\alpha})}{(u_{\alpha})\cap U_{\Lambda}^s}\simeq
		\frac{(u_{\alpha})}{u_{\alpha}U_{\Lambda}^{s-1}}.$ It is easy to see that the
		kernel of the multiplication map from $R$ to
		$\frac{(u_{\alpha})}{u_{\alpha}U_{\Lambda}^{s-1}}$ is $(0:u_{\alpha})+U_{\Lambda}^{s-1}$.
		Therefore, 
		$\frac{R}{(0:u_{\alpha})+U_{\Lambda}^{s-1}}(-d_{\alpha})\simeq \frac{(u_{\alpha})}{u_{\alpha}U_{\Lambda}^{s-1}}.$
		Set $\bar{R}=R/(u_{\alpha})$ and $ \Lambda' = \Lambda \setminus \{\alpha\}$. 
		Since $\{\bar{u}_{\lambda}:\lambda\in \Lambda' \}\subseteq
		R/(u_{\alpha})$ is a quadratic sequence and $|\Lambda' |<|\Lambda|$,
		by induction $\bar{R}/\bar{U}^s_{\Lambda'}$ has a graded filtration 
		\begin{eqnarray}\label{filtration2}
			\bar{R}/\bar{U}^s_{\Lambda'}=N_0\supseteq N_1\supseteq \dots
			\supseteq N_l=(0)
		\end{eqnarray}
		of $\bar{R}$-modules such that for each $0\leq
		j\leq l-1$, there exists $\bar{V}_j$, a related ideal to the quadratic
		sequence $\{\bar{u}_{\lambda}:\lambda\in \Lambda' \}$ and $0\leq
		d_j\leq d(s-1)$ such that $N_j/N_{j+1}\simeq
		[\bar{R}/\bar{V_j}](-d_j)$. 
		If $\bar{V_j}= \bar{U}_{\Lambda'}$, then the pre-image of $\bar{V}_j$ in
		$R$ is $U_{\Lambda}$, and therefore, $N_j/N_{j+1}\simeq [R/U_{\Lambda}](-d_j).$     So, assume that     $\bar{V_j}=(\bar{U}_{\Sigma_j}:\bar{u}_{\lambda_j})+\bar{U}_{\Lambda'}$
		for some poset ideal $\Sigma_j$ of $\Lambda'$ and $\lambda_j\in \Lambda'$
		lies inside or just above $\Sigma_j$.  The pre-image of $\bar{V}_j$ in
		$R$ is $(U_{\Sigma_j \cup \{\alpha\}}:u_{\lambda_j})+U_{\Lambda}$ so that
		$$N_j/N_{j+1}\simeq [R/((U_{\Sigma_j \cup
			\{\alpha\}}:u_{\lambda_j})+U_{\Lambda})](-d_j).$$ Since $\alpha$ is a
		minimal element in $\Lambda$ and $\Sigma_j$ is a poset ideal of
		$\Lambda'$, $\Sigma_j \cup
		\{\alpha\}$ is a poset ideal of $\Lambda$ and $\lambda_j$ lies inside
		or just above $\Sigma_j \cup \{\alpha\}$. Therefore, $(U_{\Sigma_j \cup
			\{\alpha\}}:u_{\lambda_j})+U_{\Lambda}$ is a related ideal to the
		quadratic sequence $\{u_\lambda ~ : ~ \lambda \in \Lambda\}$. Hence, 
		$R/(u_{\alpha},U_{\Lambda}^s)$ has the required graded filtration. Note that by
		Lemma \ref{tech-lemma}(2), $\{u'_{\lambda}:\lambda \in \Lambda
		\}\subseteq R/(0:u_{\alpha})$ is a quadratic sequence, where $'$
		denotes the image modulo the ideal $(0:u_{\alpha})$.
		Thus, by induction, there exists a graded filtration 
		\begin{eqnarray}\label{filtration3}
			R'/{U'}_{\Lambda}^{s-1}=
			L_0\supseteq L_1\supseteq \dots \supseteq L_k=(0)
		\end{eqnarray}
		of $R'$-modules
		such that for each $0\leq i\leq k-1$, there exists a related ideal to
		$\{u'_{\lambda} : \lambda \in \Lambda\}$, $V_i'\subset R'$ and $0\leq d'_i\leq d(s-2)$ such that
		$L_i/L_{i+1}\simeq [R'/V'_i](-d'_i).$ 
		Since $V'_i$'s are related
		ideals to $\{u'_{\lambda}:\lambda \in \Lambda \}$, $V'_i$'s are either $U_{\Lambda}'$  or  has the
		form $(U'_{\Sigma_i}:u'_{\lambda_i})+U'_{\Lambda}$ for some poset
		ideal $\Sigma_i$ of $\Lambda$ and $\lambda_i$ lying inside or just above
		$\Sigma_i$. Hence, $[R'/V_i'](-d_i')\simeq
		[R/(((U_{\Sigma}+(0:u_{\alpha})):u_{\lambda})+U_{\Lambda})](-d_i')$ or $[R'/V_i'](-d_i')\simeq [R/((0:u_{\alpha})+U_{\Lambda})](-d_i').$
		By Lemma \ref{tech-lemma1},
		$((U_{\Sigma}+(0:u_{\alpha})):u_{\lambda})+U_{\Lambda}$ is a related
		ideal to $\{u_{\lambda}:\lambda\in \Lambda \}$. Hence, we get a
		graded filtration for $[R'/{U'}_{\Lambda}^{s-1}](-d_\alpha) \simeq (u_{\alpha},U_{\Lambda}^s)/U_{\Lambda}^s$.
		By combining the filtrations (\ref{filtration2}) and
		(\ref{filtration3}) we get the required filtration of $R/U_{\Lambda}^s$.
	\end{proof}
	
	The following result on the regularity is well-known and can be
	derived from the long exact sequence of Tor modules. We state it for
	the sake of convenience.
	\begin{lemma}\label{regularity-lemma}
		Let $ R $ be a standard graded ring and $M,N, P $ be finitely generated graded $ R $-modules. 
		If $ 0 \rightarrow M \xrightarrow{f}  N \xrightarrow{g} P \rightarrow 0$ is a 
		short exact sequence with $f,g$  
		graded homomorphisms of degree zero, then 
		\begin{enumerate}
			\item $\reg (N) \leq \max \{\reg (M), \reg (P)\},$
			\item $\reg(M)\leq \max \{\reg(N),\reg(P)+1 \}$,
			\item $\reg(P)\leq \max \{\reg(M)-1,\reg(N)\}$,
			\item $\reg (M) = \reg (P)+1$ if  $\reg (N) < \reg (M)$.
		\end{enumerate}	
	\end{lemma}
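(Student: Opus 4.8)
The plan is to reduce all four assertions to the long exact sequence in local cohomology attached to the given short exact sequence, using the characterization of regularity through the graded pieces of the local cohomology modules. Write $\mathfrak{m}$ for the graded maximal ideal of $R$ and, for a finitely generated graded module $L$ and each $i \geq 0$, set $a_i(L) = \sup\{\, j : H^i_{\mathfrak{m}}(L)_j \neq 0 \,\}$, with the convention $a_i(L) = -\infty$ when $H^i_{\mathfrak{m}}(L) = 0$, so that $\reg(L) = \max_i\{a_i(L) + i\}$. Applying local cohomology to $0 \to M \xrightarrow{f} N \xrightarrow{g} P \to 0$ produces, for every homological index $i$ and every internal degree $j$, the exact fragment
\[
H^{i-1}_{\mathfrak{m}}(P)_j \to H^i_{\mathfrak{m}}(M)_j \to H^i_{\mathfrak{m}}(N)_j \to H^i_{\mathfrak{m}}(P)_j \to H^{i+1}_{\mathfrak{m}}(M)_j,
\]
and each inequality is extracted by choosing a pair $(i,j)$ that realizes the regularity of the relevant module and tracking nonvanishing along this sequence.

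For (i), I would pick $(i,j)$ with $H^i_{\mathfrak{m}}(N)_j \neq 0$ and $i + j = \reg(N)$; exactness at the middle term forces $H^i_{\mathfrak{m}}(M)_j \neq 0$ or $H^i_{\mathfrak{m}}(P)_j \neq 0$, whence $\reg(N) = i + j \leq \max\{\reg(M), \reg(P)\}$. For (ii), I would choose $(i,j)$ realizing $\reg(M)$: a nonzero class in $H^i_{\mathfrak{m}}(M)_j$ either survives to $H^i_{\mathfrak{m}}(N)_j$, giving $\reg(N) \geq i + j = \reg(M)$, or it lifts from $H^{i-1}_{\mathfrak{m}}(P)_j$, giving $\reg(P) \geq (i-1) + j = \reg(M) - 1$; in either case $\reg(M) \leq \max\{\reg(N), \reg(P) + 1\}$. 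For (iv), choosing $(i,j)$ realizing $\reg(P)$, a nonzero class in $H^i_{\mathfrak{m}}(P)_j$ either lifts from $H^i_{\mathfrak{m}}(N)_j$ (so $\reg(N) \geq i + j = \reg(P)$) or maps nontrivially into $H^{i+1}_{\mathfrak{m}}(M)_j$ (so $\reg(M) \geq (i+1) + j = \reg(P) + 1$), yielding $\reg(P) \leq \max\{\reg(M) - 1, \reg(N)\}$.

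Finally I would deduce (iii) formally from (ii) and (iv) rather than re-running a diagram chase. Under the hypothesis $\reg(N) < \reg(M)$, inequality (ii) forces the maximum there to be attained by its second term, so $\reg(M) \leq \reg(P) + 1$; and since $\reg(N) \leq \reg(M) - 1$, inequality (iv) collapses to $\reg(P) \leq \reg(M) - 1$, i.e. $\reg(P) + 1 \leq \reg(M)$. Combining the two gives the asserted equality $\reg(M) = \reg(P) + 1$.

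I do not expect a genuine obstacle, as the statement is standard; the only points demanding care are the sign and index bookkeeping in $\reg(L) = \max_i\{a_i(L) + i\}$ and keeping the shift among $H^{i-1}, H^i, H^{i+1}$ straight when converting a nonvanishing local cohomology class into a regularity bound. An entirely parallel argument runs with $\Tor^R_i(L, \K)_j$ and the long exact sequence in $\Tor$ in place of local cohomology, which is the more elementary route if one prefers to read regularity off a minimal free resolution rather than invoke the cohomological description.
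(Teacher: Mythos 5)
Your argument is correct. The paper itself gives no proof of this lemma --- it is stated as a well-known fact for convenience --- so there is nothing to compare against; your derivation via the long exact sequence in local cohomology, with $\reg(L)=\max_i\{a_i(L)+i\}$, is the standard one, the index bookkeeping in parts (i), (ii) and (iv) is right, and deducing (iii) formally from (ii) and (iv) under the hypothesis $\reg(N)<\reg(M)$ is clean. The only points worth a passing remark are the degenerate cases (if one of the modules is zero the inequalities hold trivially with $\reg=-\infty$) and, if you opt for the parallel $\Tor$ argument, that it presupposes working over a polynomial ring, which is all the paper ever needs.
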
	
	
	We now prove an upper bound for the regularity of powers
	of an ideal generated by a homogeneous quadratic sequence.
\begin{theorem}\label{reg-graded-filtration}
Let $R $ be a standard graded polynomial ring over a field $\K$. Let
$\Lambda$ be a finite poset and  $\{u_{\lambda}:\lambda \in
\Lambda\}\subseteq R$ be a quadratic sequence. Then for $s \geq 1$
$$\reg(R/U_{\Lambda}^s)\leq d(s-1)+\max_{\Sigma,\lambda} \reg(R/(
(U_{\Sigma}:u_{\lambda})+U_{\Lambda})),$$ where $\Sigma$ is a poset
ideal of $\Lambda$ and $\lambda$ lies inside or just above $\Sigma$,
and $d=\max\{ \deg(u_{\lambda}) : {\lambda \in \Lambda}\}$.
\end{theorem}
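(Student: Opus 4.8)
The plan is to feed the graded filtration produced by Theorem \ref{graded-filtration} into the short exact sequence estimates of Lemma \ref{regularity-lemma}. Concretely, fix $s \geq 1$ and invoke Theorem \ref{graded-filtration} to obtain a chain
\[
R/U_{\Lambda}^s = M_0 \supseteq M_1 \supseteq \cdots \supseteq M_k = (0)
\]
of finitely generated graded $R$-modules in which each successive quotient is $M_i/M_{i+1} \simeq [R/V_i](-d_i)$ for some related ideal $V_i$ and some shift $0 \leq d_i \leq d(s-1)$. Since $R$ is a polynomial ring over $\K$ and every $V_i$ is homogeneous, each $R/V_i$ (hence each $M_i/M_{i+1}$ and each $M_i$) is a finitely generated graded module of finite Castelnuovo--Mumford regularity, so all the quantities below are well defined.

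Next I would run the standard telescoping argument along the filtration. For each $i$ there is a degree-zero short exact sequence $0 \to M_{i+1} \to M_i \to M_i/M_{i+1} \to 0$, so Lemma \ref{regularity-lemma}(i) gives $\reg(M_i) \leq \max\{\reg(M_{i+1}), \reg(M_i/M_{i+1})\}$. Starting from $\reg(M_k) = \reg((0)) = -\infty$ and descending the index, a finite downward induction yields
\[
\reg(R/U_{\Lambda}^s) = \reg(M_0) \leq \max_{0 \leq i \leq k-1} \reg(M_i/M_{i+1}).
\]
It then remains only to estimate each factor. Because regularity is additive under shifts, $\reg(M_i/M_{i+1}) = \reg(R/V_i) + d_i$, and using $d_i \leq d(s-1)$ together with the fact that $V_i$ is one of the finitely many related ideals I obtain
\[
\reg(R/U_{\Lambda}^s) \leq d(s-1) + \max_{i} \reg(R/V_i) \leq d(s-1) + \max_{V} \reg(R/V),
\]
where the inner maximum runs over all related ideals $V$.

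Finally I would translate the maximum over related ideals into the displayed form. Every related ideal is either $U_{\Lambda}$ or of the shape $(U_{\Sigma}:u_{\lambda})+U_{\Lambda}$ with $\Sigma$ a poset ideal and $\lambda$ lying inside or just above $\Sigma$, which is exactly the index set of the stated maximum; the degenerate pairs with $\lambda \in \Sigma$ give $(U_{\Sigma}:u_{\lambda}) = R$ and so contribute the zero module, which may be ignored. I expect the only delicate point to be reconciling the factor $R/U_{\Lambda}$ itself, which is the unique quotient appearing when $s=1$, with the colon-type related ideals indexing the displayed maximum; one checks that $\reg(R/U_{\Lambda})$ also sits below this maximum, for instance by comparing it, through the short exact sequence $0 \to [R/(U_{\Sigma}:u_{\lambda})](-d_{\lambda}) \to R/U_{\Sigma} \to R/U_{\Lambda} \to 0$ associated with a maximal element, to the colon quotients. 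The substantive work, namely the construction of a filtration whose quotients are shifts of cyclic modules over related ideals with controlled shifts, has already been carried out in Theorem \ref{graded-filtration}; everything after that is bookkeeping with the exact-sequence inequalities of Lemma \ref{regularity-lemma}.
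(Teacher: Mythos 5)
Your argument is correct and follows essentially the same route as the paper: feed the filtration from Theorem \ref{graded-filtration} into the short exact sequences $0\to M_{i+1}\to M_i\to M_i/M_{i+1}\to 0$ and telescope with Lemma \ref{regularity-lemma}. The one point you flag as delicate---placing $\reg(R/U_{\Lambda})$ under the displayed maximum---needs no separate exact-sequence comparison: since $R$ is a domain and $\deg u_{\lambda}>0$, taking $\Sigma=\emptyset$ and $\lambda$ minimal gives $(U_{\emptyset}:u_{\lambda})+U_{\Lambda}=(0:u_{\lambda})+U_{\Lambda}=U_{\Lambda}$, so $U_{\Lambda}$ is itself one of the ideals indexed by the maximum.
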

\begin{proof}
By Theorem \ref{graded-filtration}, there exists a graded filtration
$$R/U_{\Lambda}^s=M_0\supseteq M_1\supseteq \cdots \supseteq
M_k=(0)$$ such that for every $0\leq i\leq k-1$, there exists a
related ideal $V_i$ and $0\leq d_i\leq d(s-1)$ with $M_i/M_{i+1}\simeq
[R/V_i](-d_i)$.  For $0 \leq i \leq k-1$, consider the following short
exact sequence: $$0 \longrightarrow M_{i+1} \longrightarrow M_i
\longrightarrow M_i/M_{i+1} \longrightarrow 0.$$  By applying Lemma
\ref{regularity-lemma} successively in above short exact sequences, we
get 
\begin{eqnarray*}
	\reg(R/U_{\Lambda}^s)& \leq & \max \{\reg(M_{i}/M_{i+1})
	: 1 \leq i \leq k-1\} \\ & = &\max \{d_i+\reg(R/V_i) : 1\leq i \leq
	k-1\} \\ & \leq & d(s-1)+\max_{\Sigma,\lambda} \reg(R/(
	(U_{\Sigma}:u_{\lambda})+U_{\Lambda})), 
\end{eqnarray*}
where $\Sigma$ is a poset ideal of $\Lambda$ and $\lambda$ lies inside or just above $\Sigma$.	
\end{proof}
We now illustrate the use of the above theorem with an example. This
example also shows that the upper bound that we have obtained is sharp.
\begin{example}{\em
Let $R=\mathbb{K}[x,y,z,w]$ be a polynomial ring and $U$ denote the defining ideal for the projective monomial curve 
\[ (x:y:z:w)=(u^{b+c},u^b v^c,u^c v^b,v^{b+c}), \]
with $\gcd(b,c)=1$ and $b>c$.  Morales and Simis proved,
\cite[Proposition 2.2]{ms92}, that $U$ is minimally generated by
$b-c+2$ elements, which are 
\[ u_1=xw-yz, u_2=x^{b-c}z^c-y^b, \dots, u_{b-c+1}=
xz^{b-1}-w^{b-c-1}y^{c+1}, u_{b-c+2}=z^b-w^{b-c}y^c.  \]
They proved that $u_1, \ldots, u_{b-c+2}$ is a quadratic sequence (even a weak
$d$-sequence). Note that $U$ is not
generated by a $d$-sequence except when $b=3$ and $c=2$. 
Let $U_i=((u_1,\dots,u_{i-1}):u_i)$. Then, from the proof of
Proposition 2.2 of \cite{ms92}, we get $U_1=(0)$, $U_2=(xw-yz)$, 
$U_3=\dots=U_{b-c+2}=(x,y)$. Hence, the related ideals to the
quadratic sequence $\{u_1,\dots,u_{b-c+2}\}$ are either $U$ or $U_i+U = (x, y,
z^b)$ for $3 \leq i \leq b-c+2$.
\vskip 2mm \noindent
\textbf{Claim:} For all $s \geq 1$, $\reg(R/U^s)=bs-1$.\\
\textit{Proof.}
First we prove that $\reg(R/U)=b-1$. 
In \cite[Proposition 2.1]{ms92}, Morales and Simis computed a graded
minimal $R$-presentation of $U$, which is as follows: \[
R(-(b+1))^{2(b-c)}\longrightarrow R(-2)\oplus
R(-b)^{b-c+1}\longrightarrow U\longrightarrow 0. \] This implies
that $\beta_{1,1+j}(R/U) = 0 = \beta_{2,2+j}(R/U)$ for $j\geq b$.
Note that $(u_1, u_2)$ is a
regular sequence and for $2\leq k\leq b-c+1$, we consider the
following short exact sequence:
\[ 0 \longrightarrow [R/(x,y)](-b)\stackrel{\cdot
	u_{k+1}}{\longrightarrow} R/(u_1,\dots,u_k) \longrightarrow
R/(u_1,\dots,u_{k+1})\longrightarrow 0. \]
Since $(u_1, u_2)$ is a regular sequence, $\reg(R/(u_1,u_2)) = b$.
Applying Lemma \ref{regularity-lemma} on the above short exact sequence, we get $\reg(R/(u_1,u_2,u_3))\leq b$, \dots, $\reg(R/U)\leq b$.
It can be seen that $\beta_{2,2+b}([R/(x,y)](-b))=1$ and
$\beta_{2,2+b}(R/(u_1,u_2))=1$ are the unique extremal Betti numbers of
$[R/(x,y)](-b)$ and $R/(u_1,u_2)$ respectively. Since
$\beta_{2,2+j+1}([R/(x,y)](-b))=0$ for $j\geq b$,
we have the corresponding long exact sequence of Tor for $2\leq k\leq b-c+1$:
\[0\rightarrow \Tor_{3,3+j}\left(\frac{R}{(u_1,\dots,u_k)}
\right)\rightarrow \Tor_{3,3+j}\left(\frac{R}{(u_1,\dots,u_{k+1})}
\right)\rightarrow \Tor_{2,3+j}\left(\frac{R}{(x,y)}(-b)
\right)\rightarrow \cdots
\]
Thus, for $2\leq k\leq b-c+1$ and $j\geq b$,
$\beta_{3,3+j}(R/(u_1,\dots,u_k))=\beta_{3,3+j}(R/(u_1,\dots,u_{k+1}))
= 0 $. Also we have for $2\leq k\leq b-c+1$,
$\beta_{i,i+j}(R/(u_1,\dots,u_{k+1}))=0$ for $i\geq 4$. Hence, $\reg(R/U)=b-1$.

Note that $U+ U_i = U$ for $i = 1, 2$ and for $3 \leq i \leq b+c-2$, $\reg(R/(U + U_i)) =
\reg(R/(x,y,z^b)) = b - 1$.  Therefore, by Theorem \ref{reg-graded-filtration},
we have $\reg(R/U^s)\leq bs-1$. Since there is an
element of degree $bs$ in $U^s,$ $bs-1\leq \reg(R/U^s)$, and hence,
$\reg(R/U^s)=bs-1$.  } \qed
	\end{example}
	
It follows from Theorem \ref{reg-graded-filtration} that given an
ideal $U_\Lambda$ generated by a quadratic sequence, an upper bound
for the regularity of all its powers can be obtained once we know the
regularity of its related ideals. Given the ideal $U_\Lambda$ and the
poset structure of $\Lambda$, one can compute the related ideals and
their regularity. But, in general, structure of the related
ideals is not very well-understood. Here we study ideals generated by
$d$-sequence for which the related ideals structure is much simpler.

Let $u_1,\dots,u_n$ be homogeneous elements in $R$.
Then, ${u_1},\dots,{u_n}$ is said to be a \textit{homogeneous $d$-sequence} if
\begin{enumerate}
\item $u_i$ is not in the ideal generated by the rest of the
$u_j$'s and
\item for all $k \geq i+1$ and all $i \geq 0,$ $( (u_1, \ldots, u_i)
: u_{i+1}u_k) = ( (u_1, \ldots, u_i) : u_k)$.
\end{enumerate}
Costa, in \cite{costa85}, proved that $(u_1, \ldots, u_n)$ is a $d$-sequence if and only
if for $1 \leq i \leq n$  
\[((u_1,\dots,u_{i-1}):u_i) \cap (u_1,\ldots,u_n)=(u_1,\dots,u_{i-1}).
\]
	
It is clear that if $u_1,\ldots,u_n$ is a $d$-sequence, then they form
a quadratic sequence with respect to the poset $\{1<\cdots <n\}$.  Let
$u_1,\dots,u_n$ be a homogeneous $d$-sequence in $R$ such that
$u_1,\dots,u_{n-1}$ is a regular sequence. Then,
one can note that the related ideals to
$\{u_1,\dots,u_n\}$ in $R$ are of the form $(u_1,\dots,u_n)$ or
$((u_1,\dots,u_{n-1}):u_n)+(u_n)$. First we obtain an upper bound for
the regularity of these related ideals.
	
\begin{proposition}\label{reg-relative-sequence}
Let $u_1,\dots,u_n$ be a homogeneous $d$-sequence with $\deg(u_i)=d_i$
in a standard graded polynomial ring $R$ over a field $\K$ such
that $u_1,\dots,u_{n-1}$ is a regular sequence.  Then,  
\[\reg({R}/{(((u_1,\dots,u_{n-1}):u_n),u_n)})\leq \max \{\reg({R}/{(u_1,\dots,u_n)}),\sum_{i=1}^{n-1} d_i-n \} .\]
\end{proposition}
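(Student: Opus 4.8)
The plan is to write $K=(u_1,\dots,u_{n-1})$ and $L=(K:u_n)$, so that the ideal in question is $L+(u_n)$ and the goal is to bound $\reg(R/(L+(u_n)))$. Since the right-hand side of the claim features $R/(u_1,\dots,u_n)=R/(K+(u_n))$ together with the quantity $\sum_{i=1}^{n-1}d_i-n$, which is exactly $\reg(R/K)-1$ when $K$ is generated by a regular sequence, I would connect $R/(L+(u_n))$ to both $R/K$ and $R/(K+(u_n))$ through two short exact sequences built from multiplication by $u_n$, and then feed them into Lemma \ref{regularity-lemma}.

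The first thing to record is the colon identity provided by the $d$-sequence hypothesis: taking $i=n-1$ and $k=n$ in condition (2) of the definition of a $d$-sequence gives $(K:u_n^2)=(K:u_n)=L$. Working in the domain $R$, this yields two structural facts. First, $(L:u_n)=((K:u_n):u_n)=(K:u_n^2)=L$, so the degree-$d_n$ multiplication map $R/L(-d_n)\xrightarrow{\cdot u_n}R/L$ is injective; moreover $(u_n)\cap L=u_nL$, whence $(u_n)/((u_n)\cap L)\cong R/L(-d_n)$ because $u_n$ is a nonzerodivisor. This gives the short exact sequence
\[0\longrightarrow R/L(-d_n)\xrightarrow{\ \cdot u_n\ }R/L\longrightarrow R/(L+(u_n))\longrightarrow 0.\]
Second, the multiplication map $R\xrightarrow{\cdot u_n}R/K$ has kernel $(K:u_n)=L$ and image $(K+(u_n))/K$, producing
\[0\longrightarrow R/L(-d_n)\longrightarrow R/K\longrightarrow R/(u_1,\dots,u_n)\longrightarrow 0.\]
Since $K=(u_1,\dots,u_{n-1})$ is a regular sequence, $R/K$ is a complete intersection and $\reg(R/K)=\sum_{i=1}^{n-1}(d_i-1)$, so that $\reg(R/K)-1=\sum_{i=1}^{n-1}d_i-n$.

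To finish, I would combine these. Applying the part of Lemma \ref{regularity-lemma} giving $\reg(M)\le\max\{\reg(N),\reg(P)+1\}$ to the second sequence yields $\reg(R/L)+d_n=\reg(R/L(-d_n))\le\max\{\reg(R/K),\ \reg(R/(u_1,\dots,u_n))+1\}$. Applying the part giving $\reg(P)\le\max\{\reg(M)-1,\reg(N)\}$ to the first sequence, and using $d_n\ge 1$, gives $\reg(R/(L+(u_n)))\le\reg(R/L)+d_n-1$. Substituting the previous bound for $\reg(R/L)+d_n$ produces
\[\reg(R/(L+(u_n)))\le\max\{\reg(R/K),\ \reg(R/(u_1,\dots,u_n))+1\}-1=\max\{\reg(R/K)-1,\ \reg(R/(u_1,\dots,u_n))\},\]
which is the assertion once $\reg(R/K)-1$ is rewritten as $\sum_{i=1}^{n-1}d_i-n$. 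The delicate point is less the homological machinery than the precise bookkeeping of degree shifts: one must verify both injectivity claims via the identity $(K:u_n)=(K:u_n^2)$, and orchestrate the two inequalities so that the shift by $d_n$ introduced in each sequence cancels exactly, leaving a bound independent of $d_n$. Establishing $(u_n)\cap L=u_nL$ and $(L:u_n)=L$ from the $d$-sequence condition is the main structural step; everything downstream is formal.
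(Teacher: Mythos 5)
Your proof is correct and takes essentially the same route as the paper's: the same two short exact sequences given by multiplication by $u_n$ on $R/(u_1,\dots,u_{n-1})$ and on $R/((u_1,\dots,u_{n-1}):u_n)$, the same use of $((u_1,\dots,u_{n-1}):u_n^2)=((u_1,\dots,u_{n-1}):u_n)$ from the $d$-sequence condition, and the same two applications of Lemma \ref{regularity-lemma}. The only cosmetic difference is that the paper records the final step as an equality $\reg(R/(L+(u_n)))=\reg(R/L)+d_n-1$, whereas you use only the inequality, which suffices.
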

\begin{proof} For convenience, let $U = (u_1, \ldots, u_n)$ and $U' =
(u_1, \ldots, u_{n-1})$.
Consider the following short exact sequence:
\[
0\longrightarrow \frac{R}{(U':u_n)}(-d_n)\stackrel{\cdot
	u_n}{\longrightarrow} \frac{R}{U'}\longrightarrow\frac{R}{U}\longrightarrow 0.
\]
Since $u_1,\dots,u_{n-1}$ is a regular sequence with $\deg(u_i)  = d_i,$
\[\reg ({R}/U' )=\sum_{i=1}^{n-1}(d_i-1)=\sum_{i=1}^{n-1}d_i -(n-1).\] 
Therefore, by Lemma \ref{regularity-lemma}, 
\[\reg({R}/{(U':u_n)})+d_n\leq \max \{\reg({R}/{U})+1, \sum_{i=1}^{n-1}d_i - (n-1) \}.\]

Now, consider the following short exact sequence:
\begin{align*}
0\longrightarrow \frac{R}{(U':u^2_n)}(-d_n)\stackrel{\cdot
u_n}{\longrightarrow} \frac{R}{(U':u_n)}\longrightarrow\frac{R}{((U':u_n),u_n)}\longrightarrow 0.
\end{align*}
Since $u_1,\dots,u_n$ is  a $d$-sequence, $(U':u^2_n)=(U':u_n)$.
Therefore, it follows from Lemma \ref{regularity-lemma} that
\begin{eqnarray*}
\reg ( {R}/{((U':u_n),u_n)})& = & \reg ({R}/{(U':u_n)} )+d_n-1\\			
& \leq & \max \left\{ \reg({R}/{U}), \sum_{i=1}^{n-1}d_i -
n\right \}
\end{eqnarray*}
and this completes the proof.
\end{proof}
	
As an immediate consequence, we obtain an upper bound for the
regularity of powers of this class of ideals.
\begin{corollary}\label{reg-d-sequence}
Let $R$ be a standard graded polynomial ring over a field $\K$ and
$u_1,\dots,u_n$ be a homogeneous $d$-sequence with $\deg(u_i)=d_i$
in $R$ such that $u_1,\dots,u_{n-1}$ is a regular sequence. Set $U
= (u_1, \ldots, u_n)$ and $d= \max \{d_i : 1 \leq i \leq n\}$.
Then, for all $s\geq 1$,
\[\reg ({R}/{U^s})\leq d(s-1) +\max \{ \reg({R}/{U}), \sum_{i=1}^{n-1}d_i - n  \}. \]
\end{corollary}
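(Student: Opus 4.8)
The plan is to apply Theorem \ref{reg-graded-filtration} to the quadratic sequence $u_1,\dots,u_n$ with respect to the totally ordered poset $\Lambda = \{1 < \cdots < n\}$, and then to control the regularities of the related ideals appearing on the right-hand side. Since $u_1,\dots,u_n$ is a $d$-sequence it is a quadratic sequence with respect to this chain, and the quantity $d = \max_i d_i$ agrees with $\max_\lambda \deg(u_\lambda)$ in the theorem. Thus Theorem \ref{reg-graded-filtration} already gives
\[\reg(R/U^s) \leq d(s-1) + \max_{\Sigma,\lambda} \reg\bigl(R/((U_\Sigma : u_\lambda) + U_\Lambda)\bigr),\]
and the entire task reduces to bounding the maximum over the related ideals.

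First I would determine which related ideals actually occur. The poset ideals of the chain $\Lambda$ are exactly $\Sigma_i = \{1,\dots,i\}$ for $0 \le i \le n$, and the elements lying inside or just above $\Sigma_i$ are those $\lambda \le i+1$. When $\lambda \le i$ we have $u_\lambda \in U_{\Sigma_i}$, so $(U_{\Sigma_i}:u_\lambda) = R$ and the corresponding related ideal is the unit ideal, contributing nothing to the maximum. The remaining cases have $\lambda = i+1$, giving related ideals $((u_1,\dots,u_i):u_{i+1}) + U_\Lambda$ (read as $((0):u_1)+U_\Lambda$ when $i=0$). Here I would invoke that $u_1,\dots,u_{n-1}$ is a regular sequence: for $0 \le i \le n-2$ the element $u_{i+1}$ is a nonzerodivisor modulo $(u_1,\dots,u_i)$, so $((u_1,\dots,u_i):u_{i+1}) = (u_1,\dots,u_i)$ and the related ideal collapses to $U_\Lambda = U$. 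Only the extreme case $i = n-1$, $\lambda = n$ survives as a genuinely new ideal, namely $((u_1,\dots,u_{n-1}):u_n) + (u_n)$. This is exactly the assertion recorded in the discussion preceding Proposition \ref{reg-relative-sequence}, so the maximum ranges over just the two ideals $U$ and $((u_1,\dots,u_{n-1}):u_n) + (u_n)$.

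It then remains to estimate these two regularities. The term $\reg(R/U)$ appears verbatim in the claimed bound, so no work is needed there. For the second ideal, Proposition \ref{reg-relative-sequence} supplies precisely
\[\reg\bigl(R/(((u_1,\dots,u_{n-1}):u_n),u_n)\bigr) \le \max\Bigl\{\reg(R/U),\ \textstyle\sum_{i=1}^{n-1} d_i - n\Bigr\}.\]
Taking the maximum of the two contributions therefore yields $\max\{\reg(R/U),\ \sum_{i=1}^{n-1} d_i - n\}$, and substituting back into the bound from Theorem \ref{reg-graded-filtration} gives the desired inequality for every $s \ge 1$. The case $s = 1$ is immediate since then $d(s-1) = 0$ and $R/U^s = R/U$ has regularity at most the stated maximum.

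As for the main obstacle: there is essentially no hard analytic step, since both tools are already in place, and the entire substance is the bookkeeping that pins down the related ideals. The one point that genuinely uses the hypotheses rather than formal manipulation is the collapse $((u_1,\dots,u_i):u_{i+1}) = (u_1,\dots,u_i)$ for $0 \le i \le n-2$, which rests on $u_1,\dots,u_{n-1}$ being a regular sequence; without the regular-sequence assumption the family of related ideals would be larger and Proposition \ref{reg-relative-sequence} would not by itself suffice.
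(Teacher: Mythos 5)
Your proposal is correct and follows essentially the same route as the paper, which likewise derives the bound by combining Theorem \ref{reg-graded-filtration} with Proposition \ref{reg-relative-sequence}. The only difference is that you spell out the classification of the related ideals for the chain poset (the collapse of $((u_1,\dots,u_i):u_{i+1})$ to $(u_1,\dots,u_i)$ for $i\le n-2$ via the regular-sequence hypothesis), which the paper records without proof in the discussion preceding Proposition \ref{reg-relative-sequence}.
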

\begin{proof}
Immediately follows from Theorem
\ref{reg-graded-filtration} and Proposition \ref{reg-relative-sequence}.
\end{proof}
	
\section{Regularity of powers of binomial edge ideals}

While there is extensive research on the regularity of powers of
monomial edge ideals corresponding to graphs, there is absolutely no
such result in the case of binomial edge ideals. In this section, we
obtain bounds as well as precise expressions for the regularity of
powers of binomial edge ideals corresponding to some classes of
graphs. We first recall the terminologies that are needed from graph
theory for our purpose.

Let $G$  be a  simple graph with the vertex set $V(G)=[n]$ and edge
set $E(G)$. A \textit{complete graph} on $[n]$, denoted by $K_n$, is
the graph with the edge set $E(G)=\{ \{i,j\}: 1\leq i < j \leq n \}$.
For $A \subseteq V(G)$, $G[A]$ denotes the \textit{induced subgraph}
of $G$ on the vertex set $A$, that is, for $i, j \in A$, $\{i,j\} \in
E(G[A])$ if and only if $ \{i,j\} \in E(G)$.  For a vertex $v$, $G
\setminus v$ denotes the  induced subgraph of $G$ on the vertex set
$V(G) \setminus \{v\}$.  A subset $U$ of $V(G)$ is said to be a
\textit{clique} if $G[U]$ is a complete graph. A vertex $v$ of $G$ is
said to be a \textit{simplicial vertex} if $v$ is contained in only
one maximal clique otherwise it is called an \textit{internal vertex}.
We denote the number of internal vertices of $G$ by $\iv(G)$. For a
vertex $v$, $N_G(v) = \{u \in V(G) :  \{u,v\} \in E(G)\}$ denotes the
\textit{neighborhood} of $v$ in $G$ and  $G_v$ is the graph on the
vertex set $V(G)$ and edge set $E(G_v) =E(G) \cup \{ \{u,w\}: u,w \in
N_G(v)\}$.  The \textit{degree} of a vertex  $v$, denoted by
$\deg_G(v)$, is $|N_G(v)|$. A vertex $v$ is said to be a
\textit{pendant vertex} if $\deg_G(v) =1$.   For an edge $e$ in $G$,
$G\setminus e$ is the graph on the vertex set $V(G)$ and edge set
$E(G) \setminus \{e\}$.  Let $u,v \in V(G)$ be such that $e=\{u,v\}
\notin E(G)$, then we denote by $G_e$, the graph on the vertex set
$V(G)$ and edge set $E(G_e) = E(G) \cup \{\{x,y\} : x,\; y \in N_G(u)
\; or \; x,\; y \in N_G(v) \}$.  A \textit{cycle} is a connected graph
$G$ with $\deg_G(v) = 2$ for all $v \in V(G)$. A graph is said to be a
\textit{unicyclic} graph if it contains exactly one cycle. A graph is
a \textit{tree} if it does not contain a cycle. The length of a
shortest induced cycle in $G$ is called the \textit{girth} of $G$. A
\textit{path graph} on $n$ vertices, denoted by $P_n$, is a graph with
the vertex set $[n]$ and the edge set $\{ \{i,i+1\}: 1\leq i\leq n-1
\}$.  A vertex $v$ of $G$ is said to be a \textit{cut vertex} if $G
\setminus v$ has more connected components than $G$.  A \textit{block}
of a graph is a  maximal nontrivial connected subgraph which has no
cut vertex. If every block of a connected graph $G$ is a complete
graph, then $G$ is called a \textit{block graph}.  Let $G$ be a tree
and $L(G) = \{v \in V(G) : \deg_G(v) = 1\}$. If $G[L(G)^c]$ is either
empty or a path, then $G$ is said to be a \textit{caterpillar}. A
collection of edges $\{e_1, \ldots, e_s\}$ is said to be a
\textit{matching} if $e_i \cap e_j = \emptyset$ for all $i \neq j$ and
this is said to be an \textit{induced matching} if the induced
subgraph on the vertices of $\{e_1, \ldots, e_s\}$ has edge set
$\{e_1, \ldots, e_s\}$. For a graph $G$, the induced matching number
of $G$ is the largest size of an induced matching, and it is denoted
by $\nu(G)$.

\begin{notation}{\em
Let $G$ be a graph on $[n]$. We reserve the notation $S$ for the
polynomial ring $\K[x_i, y_i : i \in [n]]$. Also, if there is only
one graph on $[n]$ in a given context, irrespective of the
notation used for this graph, the polynomial ring associated with
this graph would be denoted by $S$. If $H$ is any other graph on
$[k]$, then we set $S_H = \K[x_i, y_i : i \in [k]]$.  If $k \leq
n$, then $S_H$ can be considered as a subring of $S$. Note that
the graded Betti numbers of $J_H$ considered as an ideal of $S_H$
are the same as those when considered as an ideal of $S$. Therefore,
for the convenience of notation, in such cases, we consider $J_H$
as an ideal in $S$. For an edge $e = \{i, j\}$ with $i < j$, let
$f_e = x_iy_j - x_jy_i$.  }
\end{notation}

We first make some observations, which can be derived easily 
from the results existing in the literature.

\begin{obs}\label{ttt}
\par  {\em (1) Let $G$ be a disjoint union of $k$ paths with $|V(G)| =
n$. Then, it follows from
\cite[Corollary 1.2]{EHHNMJ} that $J_G$ is generated by a
regular sequence in degree $2$ of length $n-k$. Therefore, by virtue of \cite[Lemma 4.4]{STT},
$\reg(S/J^s_{P_n})=2s+n-k-2$ for all $s \geq 1$.}
\par {\em (2) Let $G = K_n$ and $I = \ini_{\lex}(J_G)$, where $\lex$
denotes the lexicographic order on $S$ induced by
$x_1>\dots>x_n>y_1>\dots>y_n$. By \cite[Theorem 2.1]{Conca97},
$I^s = \ini_{\lex}(J_G^s)$. Hence, $\reg(S/J_G^s) \leq
\reg(S/\ini_{\lex}(J_G^s)) = \reg(S/I^s)$. Note that $I$ is a quadratic squarefree
monomial ideal. If $H$ denotes the graph corresponding to the
ideal $I$, then $H$ is a weakly chordal bipartite graph,
\cite[Lemma 3.3]{EZ}. Hence, by \cite[Corollary 5.1]{JNS},
$\reg(S/I^s)  = 2s + \nu(H) - 2$. It is easy to observe in this case
that $\nu(H) = 1$. Hence, $\reg(S/I^s) = 2s - 1$ which implies
that $\reg(S/J_G^s)
\leq 2s - 1$. Since $J_G^s$ is generated in degree $2s$, we get
$\reg(S/J_G^s) = 2s-1$. Therefore, $J_G^s$ has linear resolution for
all $s \geq 1$. For $s = 1$, this property has been proved by
Saeedi Madani and Kiani in \cite[Theorem 2.1]{KMEJC}.}
\end{obs}
	
Matsuda and Murai  \cite[Corollary 2.2]{MM}  proved that if $H$ is an
induced subgraph of $G$, then $\beta_{i,j}(S/J_H) \leq
\beta_{i,j}(S/J_G)$ for all $i,j$. We generalize this to all powers.
\begin{proposition}\label{induced-subgraph}
		Let $H$ be an induced subgraph of $G$. Then, for all $i, j\geq 0$ and
		$s \geq 1$ $\beta_{i,j}(S/J^s_H)\leq \beta_{i,j}(S/J^s_G)$.
	\end{proposition}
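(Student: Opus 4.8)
The plan is to reduce the inequality to the principle that graded Betti numbers do not increase upon passing to an algebra retract, exactly as Matsuda and Murai do for $s=1$, and then to observe that the relevant retract survives the operation of taking $s$-th powers. Write $H = G[A]$ for some $A \subseteq [n]$, and let $T = S_H = \K[x_i,y_i : i \in A]$, viewed as the polynomial subring of $S$ on the variables indexed by $A$. Let $\iota\colon T \hookrightarrow S$ be the inclusion and $\pi\colon S \to T$ the retraction fixing $x_i,y_i$ for $i\in A$ and sending $x_i,y_i \mapsto 0$ for $i\notin A$, so that $\pi\iota = \mathrm{id}_T$. First I would show that $\iota$ and $\pi$ descend to graded $\K$-algebra homomorphisms $\bar\iota\colon T/J_H^s \to S/J_G^s$ and $\bar\pi\colon S/J_G^s \to T/J_H^s$ with $\bar\pi\,\bar\iota = \mathrm{id}$; that is, $T/J_H^s$ is an algebra retract of $S/J_G^s$.

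To check this, for $\bar\iota$ note that since $H$ is a subgraph of $G$ every generator $x_iy_j-x_jy_i$ of $J_H$ is also a generator of $J_G$, so $J_H\subseteq J_G$ and hence $\iota(J_H^s)\subseteq J_G^s$. For $\bar\pi$ I would evaluate $\pi$ on the generators of $J_G$: a generator $x_iy_j-x_jy_i$ with $\{i,j\}\in E(G)$ is sent to itself when $i,j\in A$ and to $0$ otherwise, and in the former case $\{i,j\}\in E(H)$ precisely because $H$ is the \emph{induced} subgraph on $A$; thus the image lies in $J_H$ in either case, giving $\pi(J_G)\subseteq J_H$ and therefore $\pi(J_G^s)\subseteq\pi(J_G)^s\subseteq J_H^s$. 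This is the only point where the induced hypothesis enters, and it is exactly what makes $\bar\pi$ well defined. The passage from $s=1$ to general $s$ costs nothing here: because $\bar\iota$ and $\bar\pi$ are algebra maps they automatically respect $s$-th powers of the defining ideals, so the retract for $(J_G,J_H)$ upgrades verbatim to one for $(J_G^s,J_H^s)$.

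Finally I would run the retract-to-Betti comparison. Since $J_G^s$ and $J_H^s$ are generated in degree $2s\ge 2$, both $S/J_G^s$ and $T/J_H^s$ are standard graded $\K$-algebras with degree-one parts $S_1$ and $T_1$, so the graded Betti numbers $\beta_{i,j}(S/J_G^s)$ and $\beta_{i,j}(S/J_H^s)=\beta_{i,j}^{T}(T/J_H^s)$ are intrinsic invariants of these algebras: they are computed by the Koszul homology of the maximal graded ideal, i.e. $\Tor^S_i(S/J_G^s,\K)$ is the homology of the Koszul complex of $S/J_G^s$ on a basis of its degree-one piece, and similarly over $T$. As Koszul homology is functorial for graded algebra homomorphisms, $\bar\iota$ and $\bar\pi$ induce maps $H_i(T/J_H^s)\to H_i(S/J_G^s)\to H_i(T/J_H^s)$ whose composite is the identity; hence $H_i(T/J_H^s)$ is a graded direct summand of $H_i(S/J_G^s)$, and comparing dimensions in each internal degree $j$ yields $\beta_{i,j}(S/J_H^s)\le\beta_{i,j}(S/J_G^s)$. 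The step I expect to require the most care is this last one, because the retract lives between the quotient algebras rather than between the ambient polynomial rings: one must justify that the polynomial-ring Betti numbers genuinely are the intrinsic Koszul-homology invariants to which the retract comparison applies (alternatively, one may simply invoke the algebra-retract Betti comparison lemma used by Matsuda and Murai). Everything else is a routine verification.
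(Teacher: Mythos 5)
Your proposal is correct and follows essentially the same route as the paper: both exhibit $S_H/J_H^s$ as an algebra retract of $S/J_G^s$ via the projection $\pi$ killing the variables indexed outside $V(H)$ (with the induced hypothesis entering exactly where you say, to get $\pi(J_G)\subseteq J_H$), and then apply the Betti-number comparison for algebra retracts, which the paper cites rather than reproves via Koszul homology. The only cosmetic difference is that the paper first establishes $J_H^s = J_G^s\cap S_H$ to realize $S_H/J_H^s$ as a subalgebra, whereas you obtain the retract directly from $\iota(J_H^s)\subseteq J_G^s$ and $\pi(J_G^s)\subseteq J_H^s$.
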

	\begin{proof}
		First we claim that $J_H^s = J_G^s \cap S_H$ for all $s \geq 1$, where
		$J_H$ is the binomial edge ideal of $H$ in  $S_H$. Since generators of
		$J_H^s$ are contained in $J_G^s$, $J_H^s \subseteq J_G^s \cap S_H$. Now,
		let $g \in J_G^s \cap S_H$.  Let $g = \sum_{ e_1, \ldots,e_s \in E(G)} h_{e_1,\ldots,e_s} f_{e_1}
		\cdots f_{e_s},$ where  $ h_{e_1,\ldots,e_s} \in S$. Now, consider
		the map  $\pi: S \to S_H$ by setting $\pi(x_i)= 0 = \pi(y_i)$ if $i
		\notin V(H)$ and $\pi(x_i) = x_i, \pi(y_i) = y_i$ if $i \in V(H)$. If 
		$e \in E(G) \setminus E(H)$,
		then $\pi(f_e) = 0$. If $e \in E(H)$, then $\pi(f_e) = f_e$. Since
		$g \in S_H$, $\pi(g) = g$.
		Therefore, we get 
		\begin{eqnarray*}
			g & = &\sum_{ e_1, \ldots,e_s \in E(G)}
			\pi(h_{e_1,\ldots,e_s}) \pi(f_{e_1}) \cdots \pi(f_{e_s}) \\
			& = & \sum_{e_1, \ldots,e_s \in E(H)}
			\pi(h_{e_1, \ldots,e_s}) f_{e_{1}} \cdots f_{e_{s}}.
		\end{eqnarray*}
		Thus, $g \in J_H^s$. Hence, $S_H/J_H^s$ is a $\K$-subalgebra of
		$S/J_G^s$. Consider, $S_H/J_H^s \xhookrightarrow{i} S/J_G^s
		\stackrel{\bar{\pi}}\rightarrow S_H/J_H^s$, where $\bar{\pi}$ is
		induced by the map $\pi$. 
		Note that $\bar{\pi} \circ i$ is identity on $S_H/J_H^s$. Thus,
		$S_H/J_H^s$ is an algebra retract of $S/J_G^s$. Now, the assertion follows
		from \cite[Corollary 2.5]{ohh2000}.
	\end{proof}
	\begin{corollary}\label{lower-bound}
		Let $G$ be a connected graph. Then, $\reg(S/J^s_G)\geq 2s+\ell(G)-2$ for all $s \geq 1$, where
		$\ell(G)$ is the length of a longest induced path of $G$.
	\end{corollary}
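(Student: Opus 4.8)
The plan is to derive the lower bound by monotonicity of the Betti table along induced subgraphs, applied to a longest induced path, together with the exact value of the regularity of powers of a path graph's binomial edge ideal recorded in Observation \ref{power-path}(1). The whole point of Proposition \ref{induced-subgraph} is that Betti numbers only go up when we pass from an induced subgraph to the ambient graph, and this passes to regularity; so it suffices to exhibit one induced subgraph whose regularity of the $s$-th power already equals $2s+\ell(G)-2$, and the longest induced path is the natural candidate.

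First I would fix a longest induced path $P$ of $G$. Since the length of a path counts its edges, $P$ has $\ell(G)$ edges and therefore $\ell(G)+1$ vertices, so $P$ is (isomorphic to) the path graph $P_{\ell(G)+1}$, and it is an induced subgraph of $G$ by the very definition of $\ell(G)$. I would then apply Proposition \ref{induced-subgraph} with $H=P$ to obtain $\beta_{i,j}(S/J_P^s)\leq \beta_{i,j}(S/J_G^s)$ for all $i,j\geq 0$ and all $s\geq 1$. Because Betti numbers are non-negative integers, this inequality shows that whenever $\beta_{i,j}(S/J_P^s)\neq 0$ one also has $\beta_{i,j}(S/J_G^s)\neq 0$; recalling that $\reg(M)=\max\{\,j-i : \beta_{i,j}(M)\neq 0\,\}$, the pointwise domination of the Betti tables yields $\reg(S/J_G^s)\geq \reg(S/J_P^s)$ for every $s\geq 1$.

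It then remains only to evaluate the right-hand side. Applying Observation \ref{power-path}(1) to $P=P_{\ell(G)+1}$ (valid since $G$ connected with at least one edge forces $\ell(G)\geq 1$, so the path is nontrivial) gives $\reg(S/J_P^s)=2s+(\ell(G)+1)-3=2s+\ell(G)-2$, and combining this with the previous inequality produces $\reg(S/J_G^s)\geq 2s+\ell(G)-2$, as claimed. There is no serious obstacle here: the argument is bookkeeping once the two cited results are in hand. The only place demanding a moment's care is the edge-versus-vertex count, namely that a longest induced path of \emph{length} $\ell(G)$ corresponds to the path graph on $\ell(G)+1$ \emph{vertices}, so that substituting $n=\ell(G)+1$ into the formula $2s+n-3$ produces exactly the constant $\ell(G)-2$.
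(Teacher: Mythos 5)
Your proposal is correct and follows exactly the paper's argument: restrict to a longest induced path $P\cong P_{\ell(G)+1}$, invoke Proposition \ref{induced-subgraph} to transfer the nonvanishing of Betti numbers (hence the regularity bound) from $S/J_P^s$ to $S/J_G^s$, and evaluate $\reg(S/J_P^s)=2s+\ell(G)-2$ via Observation \ref{power-path}(1). The only difference is that you spell out the vertex-versus-edge count and the passage from Betti-table domination to the regularity inequality, which the paper leaves implicit.
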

	\begin{proof}
		Let $H$ be a longest induced path of $G$. Then, $H$ is an induced
		subgraph of $G$. By Observation \ref{ttt}, $\reg(S_H/J_H^s)
		=2s +\ell(G)-2$ for all $s \geq 1$. Hence, the assertion follows from
		Proposition \ref{induced-subgraph}.
	\end{proof}
	
	In \cite{JAR1}, we had shown that $J_{K_{1,n}}$ is generated by a
	$d$-sequence. Schenzel and Zafar proved that $\reg(S/J_{K_{1,n}}) = 2$,
	\cite{Schenzel}. We now compute the regularity of their powers. 
	\begin{theorem}
		Let $G=K_{1,n}$ be a star graph for $n\geq 3$. Then, $\reg(S/J_G^s)=2s$ for all $s\geq 1$.
	\end{theorem}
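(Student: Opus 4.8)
The plan is to prove the two matching bounds $2s \le \reg(S/J_G^s) \le 2s$. For the lower bound I would simply apply Corollary \ref{lower-bound}: since every edge of $K_{1,n}$ contains the center, an induced path can pass through the center at most once and so has length at most $2$, while any two leaves together with the center form an induced path of length $2$; hence $\ell(K_{1,n}) = 2$ and Corollary \ref{lower-bound} gives $\reg(S/J_G^s) \ge 2s + \ell(G) - 2 = 2s$ for all $s \ge 1$.

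For the upper bound, label the center by $1$ and the leaves by $2,\dots,n+1$, so that $J_G = (f_2,\dots,f_{n+1})$ with $f_k = x_1y_k - x_ky_1$. By \cite{JAR1} this generating set is a $d$-sequence, hence a quadratic sequence for the chain poset $\{1<\cdots<n\}$, and since each generator has degree $2$, Theorem \ref{reg-graded-filtration} yields $\reg(S/J_G^s) \le 2(s-1) + \max \reg\bigl(S/(\text{related ideal})\bigr)$. (Note that Corollary \ref{reg-d-sequence} is not directly available here: $J_G$ has height $2$, so no initial segment of the generators of length greater than $2$ can be a regular sequence.) For a chain poset the related ideals arising from $\lambda$ strictly inside $\Sigma$ give the whole ring and may be discarded; the surviving related ideals are $J_G$ itself and, for $2 \le j \le n$, the ideals $((f_2,\dots,f_j):f_{j+1}) + J_G$. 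So the task reduces to identifying these colon ideals and bounding their regularity.

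The next step is the colon computation. Using the identities $x_af_b - x_bf_a = x_1(x_ay_b - x_by_a)$ and $y_af_b - y_bf_a = y_1(x_ay_b - x_by_a)$, one checks that for $2 \le a < b \le j$ the clique binomial $x_ay_b - x_by_a$ satisfies $(x_ay_b-x_by_a)f_{j+1} = y_{j+1}\,x_1(x_ay_b-x_by_a) - x_{j+1}\,y_1(x_ay_b-x_by_a) \in (f_2,\dots,f_j)$, so that $J_{K_j} \subseteq ((f_2,\dots,f_j):f_{j+1})$, where $K_j$ is the complete graph on $\{1,\dots,j\}$. Viewing $x_{j+1},y_{j+1}$ as free variables over $S/(f_2,\dots,f_j)$ and carrying out a degree-by-degree analysis of the annihilator of $x_1y_{j+1}-y_1x_{j+1}$ (which uses $((f_2,\dots,f_j):x_1) = ((f_2,\dots,f_j):y_1) = J_{K_j}$, since $J_{K_{1,j-1}}$ is radical with minimal primes $J_{K_j}$ and $(x_1,y_1)$) should give the reverse inclusion, so that $((f_2,\dots,f_j):f_{j+1}) = J_{K_j}$. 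Thus the related ideals are $J_{K_{1,n}}$ and the ideals $J_{K_j} + J_{K_{1,n}}$, which are the binomial edge ideals of the block graphs $H_j$ obtained from $K_j$ by attaching the pendant vertices $j+1,\dots,n+1$ at vertex $1$.

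It then remains to show $\reg(S/J_{H_j}) \le 2$ for each $j$ (the case $j=1$ covering $J_G$ itself). Each $H_j$ has vertex $1$ adjacent to all others, $H_j \setminus 1$ is a disjoint union of cliques, and its longest induced path again has length $2$, so $\reg(S/J_{H_j}) \ge 2$. I expect the reverse inequality to be the main obstacle: I would obtain it from the standard short exact sequences relating $J_{H_j}$, $J_{H_j \setminus v}$ and $J_{(H_j)_v}$, combined with Lemma \ref{regularity-lemma}, by induction on the number of pendant vertices, with base case the complete graph, where $\reg(S/J_{K_m}) = 1$ by Observation \ref{power-path}(2). Granting $\max \reg(S/(\text{related ideal})) = 2$, Theorem \ref{reg-graded-filtration} gives $\reg(S/J_G^s) \le 2(s-1) + 2 = 2s$, which together with the lower bound proves $\reg(S/J_G^s) = 2s$ for all $s \ge 1$.
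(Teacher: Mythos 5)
Your proposal is correct and follows essentially the same route as the paper: the lower bound via Corollary \ref{lower-bound} with $\ell(K_{1,n})=2$, and the upper bound via Theorem \ref{reg-graded-filtration} applied to the $d$-sequence of generators from \cite{JAR1}, after identifying the related ideals as binomial edge ideals of a clique with pendant edges attached at one vertex and bounding their regularity by $2$. The two steps you leave as sketches --- the reverse inclusion $((f_2,\dots,f_j):f_{j+1}) \subseteq J_{K_j}$ and the bound $\reg(S/J_{H_j}) \le 2$ --- are exactly the points where the paper instead invokes \cite[Theorem 3.7]{FM} and \cite[Theorem 8]{her2}, so to make your argument self-contained you would still need to carry those out (or cite the same results).
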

	\begin{proof}
		Let $G=K_{1,n}$ denote the star graph on the vertex set $[n+1]$ with
		the edge set $E(G)=\{ \{i,n+1\}:1\leq i\leq n\}.$ By \cite[Theorems
		4.1]{Schenzel}, $\reg(S/J_G) =2$. It follows from
		\cite[Proposition 4.8]{JAR1} that $J_G=(f_{1,n+1},\ldots,f_{n,n+1})$
		is generated by a quadratic sequence with
		respect to the poset $\Lambda=\{\{1,n+1\}<\cdots <\{n,n+1\} \}$. Let $V$ be a related ideal to $J_G$. Then
		$V$ is either $J_G$ or of the form
		$((f_{1,n+1},\dots,f_{i,n+1}):f_{i+1,n+1})+J_G$ for some $i\geq 2$. If
		$V=((f_{1,n+1},\dots,f_{i,n+1}):f_{i+1,n+1})+J_G$ for some $i \geq 2$, then by
		\cite[Theorem 3.7]{FM}, $V=J_H$, where $H$ is the graph obtained from
		the complete graph on vertex set $\{1,\dots,i,n+1\}$ by adding
		edges  $\{j,n+1\}$ for $i+1\leq j \leq n$ at the vertex $n+1$.
		Thus, by using \cite[Theorem 8]{her2}, $\reg(S/V)=\reg(S/J_H)\leq 2$.
		Therefore, by Theorem \ref{reg-graded-filtration}, we have
		$\reg(S/J_G^s)\leq 2(s-1)+2=2s$ for all $s \geq 1$. Since $\ell(G)=2$, by Corollary \ref{lower-bound},  $\reg(S/J_G^s)\geq 2s$ for all $s \geq 1$. Hence, for all $s \geq 1$, $\reg(S/J_G^s)=2s$.
	\end{proof}
	Now, we obtain the regularity of powers of binomial edge ideals of cycle graphs. 
	\begin{theorem}
		Let $n \geq 3$. Then, for all $s \geq 1$, $\reg(S/J_{C_n}^s)=2s+n-4.$
	\end{theorem}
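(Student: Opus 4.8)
The plan is to prove the matching lower and upper bounds separately. For the lower bound I would first compute $\ell(C_n)$, the length of a longest induced path of $C_n$. Deleting a single vertex leaves an induced path on $n-1$ vertices, while no induced path can use all $n$ vertices, since the two ends of such a path would be joined by the remaining cycle edge; hence $\ell(C_n)=n-2$. As $C_n$ is connected, Corollary \ref{lower-bound} immediately gives
\[
\reg(S/J_{C_n}^s)\ \geq\ 2s+\ell(C_n)-2\ =\ 2s+n-4
\]
for all $s\geq 1$. It therefore remains only to establish the reverse inequality $\reg(S/J_{C_n}^s)\leq 2s+n-4$.

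For the upper bound I would split off the case $n=3$. Here $C_3=K_3$, so Observation \ref{power-path}(2) already yields $\reg(S/J_{C_3}^s)=2s-1=2s+3-4$. For $n\geq 4$ the girth of $C_n$ is at least $4$, and the strategy is to realize $J_{C_n}$ via a $d$-sequence and invoke Corollary \ref{reg-d-sequence}. Label the edges as $e_i=\{i,i+1\}$ for $1\leq i\leq n-1$ together with $e_n=\{1,n\}$, and let $u_i=f_{e_i}$ be the corresponding quadratic generators, so $\deg(u_i)=2$ for all $i$. The first $n-1$ of these generate $J_{P_n}$, which by Observation \ref{power-path}(1) is generated by a regular sequence; hence $u_1,\dots,u_{n-1}$ is a regular sequence of length $n-1$. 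Since $C_n$ is a unicyclic graph of girth at least $4$ whose binomial edge ideal is an almost complete intersection, the results of \cite{JAR1} guarantee that $u_1,\dots,u_n$ is a $d$-sequence, so all hypotheses of Corollary \ref{reg-d-sequence} hold with every $d_i=2$ and $d=2$.

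Feeding these data into Corollary \ref{reg-d-sequence} gives
\[
\reg(S/J_{C_n}^s)\ \leq\ 2(s-1)+\max\Bigl\{\reg(S/J_{C_n}),\ \textstyle\sum_{i=1}^{n-1}2-n\Bigr\}\ =\ 2(s-1)+\max\{\reg(S/J_{C_n}),\ n-2\},
\]
so the whole statement reduces to the base case $s=1$, namely the value $\reg(S/J_{C_n})=n-2$; substituting it makes the maximum equal $n-2$ and produces $2(s-1)+(n-2)=2s+n-4$, which matches the lower bound and completes the proof. I expect the main obstacle to be exactly this base case. The lower bound $\reg(S/J_{C_n})\geq n-2$ is free from the induced-path argument above, but the upper bound $\reg(S/J_{C_n})\leq n-2$ must be supplied independently: either by citing the known regularity formula for binomial edge ideals of cycles, or by directly bounding the regularity of the single nontrivial related ideal $((u_1,\dots,u_{n-1}):u_n)+(u_n)$ that governs Proposition \ref{reg-relative-sequence}. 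Once that input is in hand, the two bounds coincide and the equality $\reg(S/J_{C_n}^s)=2s+n-4$ follows for all $s\geq 1$.
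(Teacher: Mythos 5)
Your proposal is correct and follows essentially the same route as the paper: the lower bound via Corollary \ref{lower-bound} with $\ell(C_n)=n-2$, and the upper bound by recognizing the edge binomials of $C_n$ as a $d$-sequence whose first $n-1$ members form a regular sequence, then applying Corollary \ref{reg-d-sequence} together with the known value $\reg(S/J_{C_n})=n-2$ (the paper cites Zafar for this and \cite[Theorem 4.9]{JAR1} for the $d$-sequence property). The only cosmetic difference is that you treat $n=3$ separately via the complete-graph observation, whereas the paper handles all $n\geq 3$ uniformly.
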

	\begin{proof}
		Let $G=C_n$ be the cycle graph on $[n]$. By \cite[Corollary
		16]{Zafar}, we have $\reg(S/J_G)=n-2$. Moreover,
		$f_{1,2},\dots,f_{n-1,n}$ is a regular sequence and by \cite[Theorem
		4.9]{JAR1}, $f_{1,2},\ldots,f_{n-1,n},f_{1,n}$ is a $d$-sequence.
		Hence, it follows from Corollary
		\ref{reg-d-sequence} that $\reg(S/J^s_G)\leq 2s+n-4$ for all $s\geq 1$. Since $C_n$ contains
		an induced path of length $n-2$, it follows from Corollary
		\ref{lower-bound} that $2s+n-4 \leq \reg(S/J^s_G)$ for all $s\geq 1$. Hence, $\reg(S/J^s_G)=2s+n-4$ for all $s\geq 1$.
	\end{proof}
	
	In \cite{JNR}, it was proved that if $G$ is a tree, then $\iv(G)+1 \leq
	\reg(S/J_G)$. In the next result, we obtain a lower bound
	for all powers of almost complete intersection binomial
	edge ideals of trees. We also give an upper bound for
	this class.
	\begin{theorem}\label{aci-tree}
		If $G$ is a tree such that $J_G$ is an almost complete intersection
		ideal, then for all $s \geq 1$, $2s + \iv(G) - 2 \leq \reg(S/J_G^s)
		\leq 2s + \iv(G) - 1.$
	\end{theorem}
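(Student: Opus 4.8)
The plan is to establish the two inequalities separately. The lower bound $2s+\iv(G)-2 \leq \reg(S/J_G^s)$ should follow almost immediately from the general machinery already in place. Since $G$ is a tree, every induced path $P$ has length equal to the number of edges, and for a tree whose binomial edge ideal is an almost complete intersection, I expect the longest induced path to be closely tied to the internal vertices. The hope is to show $\ell(G) = \iv(G)$ (or at least $\ell(G) \geq \iv(G)$) for this restricted class, and then invoke Corollary \ref{lower-bound}, which gives $\reg(S/J_G^s) \geq 2s + \ell(G) - 2$. The structural input is that an almost complete intersection tree is highly constrained: by the results of \cite{JAR1} cited earlier, such trees are generated by a $d$-sequence, and I would expect them to be caterpillars (so that $G \setminus L(G)$ is a path on exactly the internal vertices), which makes the longest induced path pass through all internal vertices and yields $\ell(G) = \iv(G)$.

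For the upper bound, the natural approach is to apply Corollary \ref{reg-d-sequence}. By \cite{JAR1}, since $J_G$ is an almost complete intersection binomial edge ideal of a tree, the generators $f_{e_1},\dots,f_{e_m}$ form a homogeneous $d$-sequence, and after reordering, the first $m-1$ of them form a regular sequence. All generators have degree $2$, so $d=2$ and $\sum_{i=1}^{n-1} d_i - n$ in the corollary becomes $2(m-1)-m = m-2$, where $m = |E(G)| = n-1$ is the number of edges (a tree on the relevant vertex set). Thus Corollary \ref{reg-d-sequence} gives
\[
\reg(S/J_G^s) \leq 2(s-1) + \max\{\reg(S/J_G),\ m-2\}.
\]
It therefore suffices to control the $s=1$ term.

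The key input for the upper bound is the first-power regularity. I would use the fact, recalled from \cite{JNR}, that for a tree $\reg(S/J_G) = \iv(G) + 1$ when $J_G$ is an almost complete intersection (the $+1$ coming from the presence of the extra generator beyond a complete intersection), so that $\reg(S/J_G) = \iv(G)+1$. One must then check that the second term $m-2 = n-3$ does not exceed $\iv(G)+1$; for a caterpillar with $\iv(G)$ internal vertices the number of vertices is $n = \iv(G) + (\text{number of leaves})$, and since an almost complete intersection forces a controlled leaf count, I expect $n - 3 \leq \iv(G) + 1$ to hold, so the maximum is realized by $\reg(S/J_G) = \iv(G)+1$. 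Substituting gives
\[
\reg(S/J_G^s) \leq 2(s-1) + \iv(G) + 1 = 2s + \iv(G) - 1,
\]
as required.

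The main obstacle I anticipate is the structural classification needed to pin down $\ell(G) = \iv(G)$ and to compute $\reg(S/J_G)$ exactly. These rest on the precise characterization from \cite{JAR1} of which trees have almost complete intersection binomial edge ideals; once that classification (presumably: caterpillars with a specific leaf configuration) is in hand, both the identification of the longest induced path and the first-power regularity become bookkeeping. The comparison $m-2 \leq \reg(S/J_G)$ inside the maximum is the one place where an explicit vertex/edge count for this class is essential, and if the classification does not force $n-3 \leq \iv(G)+1$ directly, I would instead argue that the contribution of the related ideals of the form $((f_{e_1},\dots,f_{e_{m-1}}):f_{e_m})+J_G$ is itself a binomial edge ideal of a graph whose regularity is bounded by $\iv(G)+1$, using a result such as \cite{her2} or \cite{FM} as was done in the star-graph case above.
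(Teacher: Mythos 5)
Your upper bound argument is essentially the paper's and is sound: by \cite{JAR1} an almost complete intersection tree is two paths joined by an edge, its generators form a homogeneous $d$-sequence of quadrics whose first $n-2$ members (the edges of the two paths) form a regular sequence, and Corollary \ref{reg-d-sequence} together with $\reg(S/J_G)=\iv(G)+1$ from \cite{JNR} gives $\reg(S/J_G^s)\leq 2(s-1)+\max\{\iv(G)+1,\ n-3\}=2s+\iv(G)-1$, since $\iv(G)$ equals $n-3$ or $n-4$ according to the two possible shapes, so the maximum is always realized by $\iv(G)+1$.

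The lower bound, however, has a genuine gap: the structural guess it rests on is false. The trees in question are not caterpillars in general, and $\ell(G)$ can be strictly smaller than $\iv(G)$. By \cite[Theorem 4.1]{JAR1} such a tree is obtained by joining two paths by an edge, hence is either a spider with three legs (the new edge meets an internal vertex of one path and a pendant vertex of the other) or an $H$-shape (both endpoints internal). For a spider with legs of lengths $\ell_1\geq\ell_2\geq\ell_3$ one has $\iv(G)=n-3$ while $\ell(G)=\ell_1+\ell_2=n-1-\ell_3$, so $\iv(G)-\ell(G)=\ell_3-2$ can be arbitrarily large; for instance three legs of length $3$ give $\ell(G)=6<7=\iv(G)$, and this graph is not a caterpillar. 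The same phenomenon occurs for the $H$-shape. Consequently Corollary \ref{lower-bound} only yields $2s+\ell(G)-2$, which falls short of the claimed $2s+\iv(G)-2$. The paper's proof avoids this by deleting a vertex adjacent to (or, in the $H$-case, equal to) the degree-$3$ vertex, so that the remaining \emph{induced} subgraph is a disjoint union of two paths; its binomial edge ideal is a complete intersection on $n-3$ (resp.\ $n-4$) quadrics, \cite[Lemma 4.4]{STT} then gives the regularity of all its powers exactly, and Proposition \ref{induced-subgraph} --- applied to this disconnected induced subgraph, not merely to a single induced path --- transfers the bound $2s+\iv(G)-2$ to $G$. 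Your proposal never invokes Proposition \ref{induced-subgraph} beyond the single-path specialization in Corollary \ref{lower-bound}, and that is exactly where it breaks.
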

	
	\begin{proof}
		Let $G$ be a tree such that
		$J_G$ is an almost complete intersection ideal.  Then, by \cite[Theorem
		4.1]{JAR1}, $G$ is obtained by adding an edge between two paths,
		either by adding an edge between two internal vertices or by adding an edge between an internal vertex and a pendant vertex. If $G$ is
		obtained by adding an edge between an internal vertex of a path and a
		pendant vertex of another path, then we say that $G$ is type $T$ and
		if $G$ is obtained by adding an edge between two internal vertices of
		two distinct paths, then we say that $G$ is type $H$. Note that if $G$
		is $T$-type, then $\iv(G) = n-3$ and if $G$ is $H$-type, then $\iv(G) =
		n-4$.
		
		Let $G$ be $T$-type. Then, by \cite[Theorems 4.1, 4.2]{JNR}, $G$
		contains no Jewel graph as an induced subgraph, and hence,
		$\reg(S/J_G)=n-2$. Therefore, it follows from Corollary
		\ref{reg-d-sequence} that $\reg(S/J_G^s)\leq 2s+n-4 = 2s + \iv(G) - 1$
		for all $s \geq 1$. Let $v$ denote a neighbor of the unique vertex of
		degree $3$ in $G$. Then, $J_{G \setminus v}$ is generated by a regular
		sequence of length at least $n-3$. Hence, for all $s \geq 1$, $2s + n- 5
		\leq \reg(S/(J_{G\setminus v})^s) \leq \reg(S/J_G^s)$, where the first inequality
		follows from \cite[Lemma 4.4]{STT} and the second inequality follows from
		Proposition \ref{induced-subgraph}. Therefore, $2s + \iv(G) -2 \leq
		\reg(S/J_G^s)$ for all $s \geq 1$.

		Now, let $G$ be $H$-type
		and $e=\{u,v\}$ be the edge such that
		$G\setminus e$ is a disjoint union of two paths. Then, the related ideal
		to $J_G$ is either $J_G$ or of the form $(J_{G\setminus e} : f_e) +
		J_G$. By \cite[Theorems 4.1, 4.2]{JNR},
		$\reg(S/J_G)=n-3$.  It follows from \cite[Theorem 3.7]{FM} that $(J_{G\setminus e} : f_e) +
		J_G = J_{(G\setminus e)_e \cup \{e\}}$. Since $(G\setminus e)_e \cup
		\{e\}$ is a block graph with no vertex contained in more than two
		maximal cliques, it follows from \cite[Corollary 3.1]{JNR} that
		$\reg(S/J_{(G\setminus e)_e \cup \{e\}})=n-3 = \iv(G) + 1$. Hence, by
		Theorem \ref{reg-graded-filtration}, $\reg(S/J_G^s)\leq
		2s+n-5 = 2s + \iv(G) - 1$. As in the previous case, it can be seen
		that $J_{G \setminus u}$ is generated by a regular sequence of length
		$n-4$. Hence, by \cite[Lemma 4.4]{STT} and Proposition
		\ref{induced-subgraph}, we get $2s + \iv(G) - 2 = 2s + n-6 =
		\reg(S/J_{G \setminus u}^s) \leq \reg(S/J_G^s)$ for all $s \geq 1$.
	\end{proof}

	\begin{corollary}\label{caterpillar}
		If $G$ is a caterpillar tree such that $J_G$ is an almost complete
		intersection, then $\reg(S/J_G^s) = 2s + \iv(G) - 1$.
	\end{corollary}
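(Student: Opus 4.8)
The plan is to observe that Theorem \ref{aci-tree} already supplies the upper bound $\reg(S/J_G^s) \leq 2s + \iv(G) - 1$, so the only thing left to establish is the matching lower bound $\reg(S/J_G^s) \geq 2s + \iv(G) - 1$. In view of Corollary \ref{lower-bound}, which gives $\reg(S/J_G^s) \geq 2s + \ell(G) - 2$, it suffices to prove the purely combinatorial identity $\ell(G) = \iv(G) + 1$ for a caterpillar tree $G$. Substituting this into Corollary \ref{lower-bound} produces the desired lower bound and hence, together with Theorem \ref{aci-tree}, the claimed equality.

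So the heart of the proof is the claim $\ell(G) = \iv(G) + 1$. First I would note that in a tree every path is an induced path, since a chord would create a cycle; hence $\ell(G)$ equals the length of a longest path of $G$. For the inequality $\ell(G) \leq \iv(G) + 1$, take a longest path $v_0 - v_1 - \cdots - v_L$: each interior vertex $v_1, \ldots, v_{L-1}$ has degree at least two and is therefore an internal vertex of $G$. These are $L - 1$ distinct internal vertices, so $\iv(G) \geq L - 1 = \ell(G) - 1$. This direction in fact holds for every tree and uses no hypothesis on $G$.

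For the reverse inequality $\ell(G) \geq \iv(G) + 1$ I would use the caterpillar hypothesis. By definition the internal vertices $V(G) \setminus L(G)$ induce a path $w_1 - \cdots - w_m$ with $m = \iv(G)$. Since each of $w_1, w_m$ is internal in $G$ but is an endpoint of this induced path, each has at least one neighbour among the leaves $L(G)$; choosing a leaf $a$ adjacent to $w_1$ and a leaf $b$ adjacent to $w_m$ (distinct, the case $m = 1$ being covered by $\deg(w_1) \geq 2$) yields a path $a - w_1 - \cdots - w_m - b$ of length $m + 1 = \iv(G) + 1$. Combining the two inequalities gives $\ell(G) = \iv(G) + 1$, completing the argument.

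The computation is short, and the only place that demands care is the lower bound $\ell(G) \geq \iv(G) + 1$: one must check that extending the spine $w_1 - \cdots - w_m$ by a leaf at each end really produces a genuine path, including the degenerate case $\iv(G) = 1$, where $G$ is a star and the two appended leaves must be chosen distinct. As an alternative route, one could instead invoke the structure theorem used in the proof of Theorem \ref{aci-tree}, split into the type $T$ and type $H$ cases, and read off $\ell(G)$ directly from the leg lengths (the caterpillar condition forcing the shortest leg at each branch vertex to have length one); this also works but is less uniform, so I would prefer the caterpillar computation above.
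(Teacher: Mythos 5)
Your proof is correct and follows essentially the same route as the paper: both arguments take the upper bound from Theorem \ref{aci-tree} and obtain the matching lower bound from Corollary \ref{lower-bound} via the identity $\ell(G)=\iv(G)+1$ for a caterpillar (the paper states this identity without proof as "$G$ has a longest induced path $P$ with $\iv(G)=\iv(P)=\ell(P)-1$", while you verify it carefully). No substantive difference.
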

	
	\begin{proof}
		If $G$ is a caterpillar tree on $[n]$, then $G$ has a longest induced
		path, say $P$, such that $\iv(G) = \iv(P)=\ell(P)-1$. Hence, by
		Corollary \ref{lower-bound}, $2s + \iv(G) - 1 \leq
		\reg(S/J_G^s)$. The upper bound follows from Theorem \ref{aci-tree}.
	\end{proof}
	
	We are unable to prove, but believe that the answer to the following
	question is affirmative:
	
\begin{question}
If $G$ is a tree, then is $2s + \iv(G) - 2 \leq \reg(S/J_G^s)$ for all
$s \geq 1$?
\end{question}

Now, we deal with unicyclic graphs, other than cycles, whose
binomial edge ideals are almost complete intersection. We first
develop the tools required for that.  
	\begin{definition}
		Let $G_1$ and $G_2$ be two subgraphs of a graph $G$. If $G_1\cap
		G_2=K_m$, the complete graph on $m$ vertices with $G_1\neq K_m$ and
		$G_2\neq K_m$, then $G$ is called the clique sum of $G_1$ and $G_2$
		along the complete graph $K_m$, denoted by $G_1\cup_{K_m} G_2$. If
		$m=1$, the clique sum of $G_1$ and $G_2$ along a vertex is denoted by
		$G_1\cup G_2$. If $m =2$ and $K_2 = e$, then 	the clique sum of
		$G_1$ and $G_2$ along $e$ is denoted by $G_1 \cup_e G_2$.
	\end{definition}
	
	\begin{minipage}\linewidth
		\begin{minipage}{0.2\linewidth}
			\begin{tikzpicture}[scale=0.75]
			\draw [line width=0.5pt] (1,3)-- (1,1);
			\draw [line width=0.5pt] (1,1)-- (3,1);
			\draw [line width=0.5pt] (3,1)-- (3,3);
			\draw [line width=2pt] (3,3)-- (1,3);
			\draw [line width=0.5pt] (1,3)-- (1.97,4.52);
			\draw [line width=0.5pt] (1.97,4.52)-- (3,3);
			\begin{scriptsize}
			\draw [fill=black] (1,3) circle (2pt);
			\draw [fill=black] (1,1) circle (2pt);
			\draw [fill=black] (3,1) circle (2pt);
			\draw [fill=black] (3,3) circle (2pt);
			\draw [fill=black] (1.97,4.52) circle (2pt);
			\draw[color=black] (2.2, 0.5) node {$H_1$};
			\end{scriptsize}
			\end{tikzpicture}
		\end{minipage}
		\begin{minipage}{0.30\linewidth}
			\begin{tikzpicture}[scale=0.85]
			\draw [line width=0.5pt] (2,3)-- (1,2);
			\draw [line width=0.5pt] (1,2)-- (2,1);
			\draw [line width=0.5pt] (2,1)-- (3,2);
			\draw [line width=0.5pt] (3,2)-- (2,3);
			\draw [line width=0.5pt] (2,3)-- (2,1);
			\draw [line width=0.5pt] (1,2)-- (3,2);
			\draw [line width=0.5pt] (3,2)-- (4,2);
			\draw [line width=0.5pt] (4,2)-- (5,3);
			\draw [line width=0.5pt] (5,3)-- (6,2);
			\draw [line width=0.5pt] (6,2)-- (5,1);
			\draw [line width=0.5pt] (5,1)-- (4,2);
			\begin{scriptsize}
			\draw [fill=black] (2,3) circle (1.5pt);
			\draw [fill=black] (1,2) circle (1.5pt);
			\draw [fill=black] (2,1) circle (1.5pt);
			\draw [fill=black] (3,2) circle (2.5pt);
			\draw [fill=black] (4,2) circle (2.5pt);
			\draw [fill=black] (5,3) circle (1.5pt);
			\draw [fill=black] (6,2) circle (1.5pt);
			\draw [fill=black] (5,1) circle (1.5pt);
			\draw[color=black] (3.5, 0.5) node {$H_2$};
			\end{scriptsize}
			\end{tikzpicture}
		\end{minipage}
		\begin{minipage}{0.35\linewidth}
			The first graph $H_1$ on the left is the clique sum of a $K_3$ and
			$C_4$ along an edge. The second graph, $H_2$, is a clique sum of a $K_4$, an
			edge and a $C_4$ along two vertices.
		\end{minipage}
	\end{minipage}

	To understand the regularity of powers of binomial edge ideals of
	unicyclic graphs which are almost complete intersections, we first prove
	an auxiliary result.
	\begin{proposition}\label{clique-sum-regularity}
		Let $n, m \geq 3$ be integers and $G$ be the clique sum
		of a cycle $C_n$ and a complete graph $K_m$ along an edge $e$.
		Then, $\reg(S/J_G)=n-1$.
	\end{proposition}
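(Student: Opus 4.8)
The plan is to prove the two inequalities separately: the lower bound from a long induced path and the upper bound by induction on $n$ via Ohtani's lemma.

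For the lower bound, label the cycle as $1-2-\cdots-n-1$ with shared edge $e=\{1,2\}$, and let $K_m$ sit on $\{1,2,w_1,\dots,w_{m-2}\}$. I would exhibit the explicit path $w_1,1,n,n-1,\dots,3$: its consecutive pairs are the clique edge $\{w_1,1\}$ and the cycle edges $\{1,n\},\{n,n-1\},\dots,\{4,3\}$, and it has no chords, since the only neighbours of $w_1$ are $1,2$ and the other $w_j$, while $1$ is adjacent on the cycle only to $2$ and $n$. This is an induced path on $n$ vertices, so $\ell(G)\geq n-1$, and Corollary \ref{lower-bound} with $s=1$ gives $\reg(S/J_G)\geq n-1$.

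For the upper bound I would induct on $n$. In the base case $n=3$ the graph is $K_3$ and $K_m$ glued along $e$. Cutting the non-simplicial shared vertex $1$ in the sequence (\ref{ohtani-ses}), one identifies $G_1=K_{m+1}$ and $G_1\setminus 1=K_m$, both having linear resolution, while $G\setminus 1$ is the block graph $K_{m-1}$ with a single pendant edge, of regularity $2$; Lemma \ref{regularity-lemma}(ii) then yields $\reg(S/J_G)\leq\max\{2,1+1\}=2$. For the inductive step ($n\geq 4$) I would apply Lemma \ref{ohtani-lemma} at the cycle vertex $v=n$, which is adjacent to $1$ and $n-1$ and hence non-simplicial. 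The key point is that $G_n\setminus n$ is the cycle $C_{n-1}$ on $\{1,\dots,n-1\}$ (the chord $\{1,n-1\}$ closes it) glued to $K_m$ along $e$, so by induction $\reg\bigl(S/((x_n,y_n)+J_{G_n\setminus n})\bigr)=n-2$. It then remains to bound the other two terms of (\ref{ohtani-ses}): $G\setminus n$, which is the path $P_{n-1}$ on $\{1,\dots,n-1\}$ with $K_m$ attached along its terminal edge $e$, and $G_n=G\cup\{1,n-1\}$, which is $C_{n-1}\cup_{K_2}K_m$ with an extra triangle glued along $\{1,n-1\}$. Granting $\reg(S/J_{G\setminus n})\leq n-2$ and $\reg(S/J_{G_n})\leq n-1$, Lemma \ref{regularity-lemma}(ii) gives $\reg(S/J_G)\leq\max\{n-2,n-1,(n-2)+1\}=n-1$, closing the induction.

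The main obstacle is precisely controlling these two auxiliary terms, since neither $G\setminus n$ nor $G_n$ lies in the original family. For $G\setminus n$ I would run a parallel induction on the path length, showing that the regularity of $P_k$ with $K_m$ glued along a terminal edge equals $k-1$ (the base $k=2$ being $K_m$), again by cutting the far endpoint and invoking the Ohtani sequence. The genuinely hard term is $G_n$: here I would view it as the edge-clique-sum of the inductive graph $C_{n-1}\cup_{K_2}K_m$ with a $K_3$ along $\{1,n-1\}$, and show that gluing a clique along an edge raises $\reg(S/J_{-})$ by exactly $\reg(S/J_{K_3})=1$. This additivity under edge-clique-sum is the crux, and it is most naturally established by the same Ohtani induction applied at the triangle's apex, so that all three families (cycle-plus-clique, path-plus-clique, and these with an extra triangle) are treated by a single simultaneous induction on the number of vertices.
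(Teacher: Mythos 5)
Your lower bound and your base case $n=3$ match the paper, but your inductive step diverges from it in a way that opens a genuine gap. The paper applies Ohtani's lemma at a vertex $v$ of the \emph{shared} edge $e=\{u,v\}$: then $G_v=C_{n-1}\cup_{e'}K_{m+1}$ and $G_v\setminus v=C_{n-1}\cup_{e'}K_m$ (where $e'=\{u,w\}$ with $w\in N_{C_n}(v)$) both remain inside the family being inducted on, and $G\setminus v=P_{n-1}\cup K_{m-1}$ is a block graph of known regularity $n-1$; the three terms of the sequence \eqref{ohtani-ses} are then immediately controlled. You instead cut at the cycle vertex $n$ opposite the clique, which pushes two of the three terms outside the family, and the devices you propose for recovering them do not work as stated. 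For $G\setminus n$ (a path with $K_m$ on its terminal edge) you suggest an induction ``cutting the far endpoint,'' but that endpoint is a pendant, hence simplicial, vertex, and Lemma \ref{ohtani-lemma} explicitly requires a non-simplicial vertex; this term is in fact salvageable without any induction, since $G\setminus n$ is a block graph whose regularity $n-2$ follows from the known results on clique sums along vertices (e.g.\ \cite[Theorem 3.1]{JNR}).

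The unsalvaged part is $G_n=(C_{n-1}\cup_{\{1,2\}}K_m)\cup_{\{1,n-1\}}K_3$. You reduce it to the claim that gluing a clique along an edge raises $\reg(S/J_{-})$ by exactly $1$, and propose to prove this ``by the same Ohtani induction applied at the triangle's apex.'' But for $n\geq 5$ the apex $n$ is contained in the single maximal clique $\{1,n-1,n\}$ of $G_n$, i.e.\ it is simplicial, so Ohtani's lemma cannot be applied there either; and the additivity claim itself, for clique sums along an edge, is an unproven statement at least as strong as the proposition you are trying to prove (the proposition \emph{is} the case of gluing $K_m$ to $C_n$ along an edge). So as written the induction does not close. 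The repair is exactly the paper's choice of vertex: cut at an endpoint of the gluing edge $e$, so that completing its neighbourhood absorbs one cycle vertex into the clique and the recursion never leaves the family $\{C_k\cup_{e}K_r\}$.
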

	\begin{proof}
		It is easy to notice that $G$ contains an induced path of length $n-1$.
		Hence, by \cite[Corollary 2.3]{MM}, $n-1\leq \reg (S/J_G)$. 
		Note that $G$ is a graph on $n+m-2$ vertices. Since $K_m$ is the
		maximal clique of largest size in $G$, By \cite[Theorem 2.1]{ERT20},  
		$\reg(S/J_G) \leq (n+m-2) - (m-1) = n-1$.
	\end{proof}
	\noindent
	\begin{minipage}\linewidth
		\begin{minipage}{0.55\linewidth}
			Let $G_1$ and $G_2$ denote 
			graphs on the vertex set $[m]$ with edge sets given by $E(G_1)
			=\{\{1,2\},\{2,3\},\ldots, \{m-1,m\},\{2,m\}\}$ and
			$E(G_2)=\{\{1,2\},\{2,3\},\ldots, \{m-1,m\},\{2,m-1\}\}$. Let
			$e=\{1,2\},e'=\{m-1,m\}$ denote edges in $G_2$.
		\end{minipage}
		\begin{minipage}{0.45\linewidth}
			\hspace*{8mm}
			\captionsetup[figure]{labelformat=empty}
			\begin{figure}[H]\label{fig1}
				\begin{tikzpicture}[scale=0.8]
				\draw  (2,4)-- (2,3);
				\draw  (3,3)-- (4,2);
				\draw  (2,3)-- (1,2);
				\draw  (5,2)-- (6,3);
				\draw  (6,3)-- (6,4);
				\draw  (2,3)-- (3,3);
				\draw  (6,3)-- (7,3);
				\draw  (7,4)-- (7,3);
				\draw  (7,3)-- (8,2);
				\begin{scriptsize}
				\draw [fill=black] (2,4) circle (1pt);
				\draw [fill=black] (2,3) circle (1pt);
				\draw [fill=black] (7,4) circle (1pt);
				\draw [fill=black] (3,3) circle (1pt);
				\draw [fill=black] (4,2) circle (1pt);
				\draw [fill=black] (1,2) circle (1pt);
				\draw [fill=black] (5,2) circle (1pt);
				\draw [fill=black] (6,3) circle (1pt);
				\draw [fill=black] (6,4) circle (1pt);
				\draw [fill=black] (7,3) circle (1pt);
				\draw [fill=black] (8,2) circle (1pt);
				\draw [fill=black] (1,1.5) circle (1pt);
				\draw [fill=black] (1.5,1) circle (1pt);
				\draw [fill=black] (2,0.5) circle (1pt);
				\draw [fill=black] (3,0.5) circle (1pt);
				\draw [fill=black] (3.5,1) circle (1pt);
				\draw [fill=black] (4,1.5) circle (1pt);
				\draw [fill=black] (5,1.5) circle (1pt);
				\draw [fill=black] (5.5,1) circle (1pt);
				\draw [fill=black] (6,0.5) circle (1pt);
				\draw [fill=black] (7,0.5) circle (1pt);
				\draw [fill=black] (7.5,1) circle (1pt);
				\draw [fill=black] (8,1.5) circle (1pt);
				\draw (7.6, 3.1) node {$m-1$};
				\draw (7.3, 4) node {$m$};
				\draw (5.8, 3.1) node {$2$};
				\draw (5.8, 4.1) node {$1$};
				\draw (1.8, 4.1) node {$1$};
				\draw (1.8, 3.1) node {$2$};
				\draw (3.3, 3.1) node {$m$};
				\draw (2.5, 0.1) node {$G_1$};
				\draw (6.5, 0.1) node {$G_2$};
				\end{scriptsize}
				\end{tikzpicture}
			\end{figure}
		\end{minipage}
	\end{minipage}
	
	Now, we proceed to study the regularity of powers of almost complete intersection binomial edge ideals of unicyclic graphs.
	In \cite{JAR1}, we had proved that any such graph is obtained by
	attaching paths to the free vertices of $G_1$ and $G_2$. So, we first
	compute the regularity of powers of $J_{G_1}$ and $J_{G_2}$.
	
	\begin{proposition}\label{g1}
		Let $m\geq 4$ and $G_1$ be the graph as given above. Then, $\reg(S/J_{G_1})=m-2$.
	\end{proposition}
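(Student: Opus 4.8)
The plan is to combine a standard lower bound with an upper bound obtained from Ohtani's lemma, reducing everything to the clique-sum regularity already established in Proposition~\ref{clique-sum-regularity}. Note first that $G_1$ is the cycle $C_{m-1}$ on the vertices $2,3,\dots,m$ together with a pendant vertex $1$ attached at $2$. The path on the vertices $1,2,\dots,m-1$ is induced, since its only possible chord would be $\{2,m\}$ and $m$ is not among these vertices, so it has length $m-2$. Hence \cite[Corollary 2.3]{MM} gives $m-2\le\reg(S/J_{G_1})$, and it remains to prove the reverse inequality.

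For the upper bound I would apply Ohtani's lemma (Lemma~\ref{ohtani-lemma}) at the vertex $v=2$. Since $N_{G_1}(2)=\{1,3,m\}$ and $\{1,3\}\notin E(G_1)$, the vertex $2$ is not simplicial, so the short exact sequence~\eqref{ohtani-ses} is available; applying Lemma~\ref{regularity-lemma}(ii) and using that adjoining the variables $x_2,y_2$ to the ideal does not change regularity, we obtain
\[
\reg(S/J_{G_1})\le\max\{\reg(S/J_{G_1\setminus 2}),\ \reg(S/J_{(G_1)_2}),\ \reg(S/J_{(G_1)_2\setminus 2})+1\}.
\]
The heart of the argument is to identify the three graphs on the right. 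Deleting $2$ leaves the path on $3,4,\dots,m$ together with an isolated vertex $1$, so $J_{G_1\setminus 2}=J_{P_{m-2}}$ and $\reg(S/J_{G_1\setminus 2})=m-3$ by Observation~\ref{power-path}. Passing to $(G_1)_2$ adjoins the edges $\{1,3\},\{1,m\},\{3,m\}$; I would check that $\{1,2,3,m\}$ then becomes a $K_4$, while the new edge $\{3,m\}$ closes the path on $3,4,\dots,m$ into a cycle $C_{m-2}$. Thus $(G_1)_2$ is the clique sum of $C_{m-2}$ and $K_4$ along the edge $\{3,m\}$, and deleting $2$ from it turns the $K_4$ into the triangle on $\{1,3,m\}$, giving the clique sum of $C_{m-2}$ and $K_3$ along $\{3,m\}$. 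By Proposition~\ref{clique-sum-regularity} both of these have regularity $(m-2)-1=m-3$.

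Substituting these values gives $\reg(S/J_{G_1})\le\max\{m-3,\ m-3,\ (m-3)+1\}=m-2$, which matches the lower bound and proves $\reg(S/J_{G_1})=m-2$. The only delicate point is that Proposition~\ref{clique-sum-regularity} requires a genuine cycle $C_{m-2}$, i.e. $m\ge 5$, so I expect the small case $m=4$ to need a separate (easy) treatment: there $(G_1)_2=K_4$ and $(G_1)_2\setminus 2=K_3$, each of regularity $1=m-3$, while $G_1\setminus 2$ is a single edge with an isolated vertex, also of regularity $1$, and the same estimate yields $\reg(S/J_{G_1})\le 2=m-2$. Beyond this bookkeeping, the substantive step is the combinatorial identification of $(G_1)_2$ as a clique sum of a cycle and a complete graph along an edge, which is exactly the configuration handled by Proposition~\ref{clique-sum-regularity}.
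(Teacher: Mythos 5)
Your proof is correct, but it takes a genuinely different route from the paper's. The paper's argument is two lines: the upper bound $\reg(S/J_{G_1})\leq m-2$ is quoted directly from \cite[Theorem 3.2]{KMJCTA}, and the lower bound comes from the induced path $G_1\setminus m$ via \cite[Corollary 2.3]{MM} (the same path you use, just described as a vertex deletion rather than a vertex subset). What you do instead is re-derive the upper bound internally: you apply Ohtani's decomposition (Lemma \ref{ohtani-lemma} and the sequence \eqref{ohtani-ses}) at the non-simplicial vertex $2$, identify $G_1\setminus 2$ as a path, and identify $(G_1)_2$ and $(G_1)_2\setminus 2$ as clique sums $C_{m-2}\cup_{\{3,m\}}K_4$ and $C_{m-2}\cup_{\{3,m\}}K_3$, so that Proposition \ref{clique-sum-regularity} gives each of the three terms, and Lemma \ref{regularity-lemma}(ii) yields $\reg(S/J_{G_1})\leq\max\{m-3,\,m-3,\,m-2\}=m-2$. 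All the graph identifications check out (including the non-simpliciality of $2$ and the separate bookkeeping for $m=4$, where $(G_1)_2=K_4$ and the cycle degenerates). The trade-off: the paper's proof is shorter but leans on an external general bound, whereas yours is self-contained given Proposition \ref{clique-sum-regularity} and is methodologically uniform with the paper's own proof of Proposition \ref{g2}, which uses exactly this kind of decomposition-plus-clique-sum argument; it costs a case split at $m=4$ and a somewhat longer combinatorial verification.
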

	\begin{proof}
		By \cite[Theorem 3.2]{KMJCTA}, $\reg(S/J_{G_1})\leq m-2$. Note that $G_1\setminus m$ is an induced
		path in $G_1$ of length $m-2$. So by
		\cite[Corollary 2.3]{MM}, $m-2\leq
		\reg(S/J_{G_1})$. Hence, $\reg(S/J_{G_1})=m-2$.
	\end{proof}
	\begin{proposition}\label{g2}
		Let $G_2$ be the graph as given above for $m\geq 6$. Then, $\reg(S/J_{G_2})=m-3$.
	\end{proposition}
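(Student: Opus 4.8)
The lower bound is immediate. Deleting the pendant vertex $m$ from $G_2$ leaves exactly the graph $G_1$ on $[m-1]$ of Proposition \ref{g1}, which is an induced subgraph of $G_2$; since $\reg(S/J_{G_1})=(m-1)-2=m-3$, Proposition \ref{induced-subgraph} gives $m-3\le \reg(S/J_{G_2})$. Equivalently, the subgraph induced on $\{1,\dots,m-2\}$ has edge set $\{\{1,2\},\{2,3\},\dots,\{m-3,m-2\}\}$ (the chord $\{2,m-1\}$ drops out), so it is a path of length $m-3$, whence $\ell(G_2)\ge m-3$ and \cite[Corollary 2.3]{MM} applies.

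For the upper bound I would use Ohtani's lemma. The vertex $2$ has $N_{G_2}(2)=\{1,3,m-1\}$ and is not simplicial (as $1$ and $3$ are nonadjacent), so Lemma \ref{ohtani-lemma} gives the short exact sequence \eqref{ohtani-ses} with $v=2$. The plan is then to identify the three graphs that occur. First, $G_2\setminus 2$ is the disjoint union of the path on $\{3,\dots,m\}$ with the isolated vertex $1$, so by Observation \ref{power-path} the first summand has regularity $\reg(S/J_{P_{m-2}})=m-3$. Next, $(G_2)_2$, obtained by completing $\{1,3,m-1\}$ to a clique, is the clique sum of the cycle $C_{m-3}$ on $\{3,\dots,m-1\}$ and the complete graph $K_4$ on $\{1,2,3,m-1\}$ along the edge $\{3,m-1\}$, carrying in addition the pendant vertex $m$ at $m-1$. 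Finally, $(G_2)_2\setminus 2$ is the clique sum of the same $C_{m-3}$ with $K_3$ on $\{1,3,m-1\}$ along $\{3,m-1\}$, again with $m$ pendant at $m-1$.

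I would bound the last two regularities using Proposition \ref{clique-sum-regularity}, which gives $m-4$ for the clique sum of $C_{m-3}$ with $K_4$ (respectively $K_3$) along an edge, and then control the contribution of the single pendant vertex $m$. Granting $\reg(S/J_{(G_2)_2})\le m-3$ and $\reg(S/J_{(G_2)_2\setminus 2})\le m-4$, Lemma \ref{regularity-lemma} applied to \eqref{ohtani-ses} yields
\[\reg(S/J_{G_2})\le \max\{\,m-3,\ m-3,\ (m-4)+1\,\}=m-3,\]
which together with the lower bound gives $\reg(S/J_{G_2})=m-3$.

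The crux is the pendant bookkeeping in the last step, and I expect it to be the main obstacle. No matter which of the two symmetric cut vertices $2,\,m-1$ is deleted, the opposite pendant survives into \emph{both} auxiliary graphs, so a single application of Lemma \ref{ohtani-lemma} does not close the argument. The real content is to show that attaching this pendant to the clique-sum graph does not raise its regularity above $m-4$ --- in particular that the quotient term $(G_2)_2\setminus 2$ stays at $m-4$ rather than climbing to $m-3$, since it is precisely this that pins the final estimate at $m-3$ instead of the weaker $m-2$ a careless bound would give. I would establish this either by an induction over the family of cycles carrying a clique and a pendant, or by a separate whisker lemma; a cleaner alternative, paralleling the proof of Proposition \ref{g1}, would be to invoke a direct structural upper bound for binomial edge ideals of such ``decorated cycles'' that returns $m-3$ in one step. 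In all approaches the hypothesis $m\ge 6$ is what guarantees the underlying cycle is a genuine $C_{\ge 4}$, so that Proposition \ref{clique-sum-regularity} is applicable.
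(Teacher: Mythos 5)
Your lower bound is fine (both the identification $G_2\setminus m\cong G_1$ on $m-1$ vertices and the induced path on $\{1,\dots,m-2\}$ work), and your identification of the three graphs in Ohtani's decomposition at $v=2$ is correct. But the upper bound is not proved: everything rests on the two estimates you explicitly ``grant,'' namely $\reg(S/J_{(G_2)_2})\le m-3$ and, crucially, $\reg(S/J_{(G_2)_2\setminus 2})\le m-4$, and you offer only a menu of possible strategies (``induction,'' ``a separate whisker lemma,'' ``a direct structural bound'') without carrying any of them out. This is not a routine omission --- it is the entire content of the proposition. Note that $(G_2)_2\setminus 2$ is precisely $C_{m-3}\cup_{\{3,m-1\}}K_3$ with a pendant attached at $m-1$, which is isomorphic (up to an isolated vertex) to the graph $H=(G_2\setminus\{1,2\})_{\{1,2\}}$ occurring in the paper's own proof; the paper bounds its regularity by $m-4$ only by running a second colon-ideal exact sequence through \cite[Theorem 3.7]{FM} and Proposition \ref{clique-sum-regularity}. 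So your route, once completed, must reprove exactly the paper's key intermediate step; it is not a shortcut.

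Two further cautions. First, the ``whisker lemma'' you hope for is genuinely delicate here: for decomposable clique sums (a pendant attached at a free vertex) the regularity of a binomial edge ideal goes \emph{up} by one --- this is how \cite[Theorem 3.1]{JNR} is used elsewhere in the paper --- so a generic pendant-attachment principle would predict $\reg(S/J_{(G_2)_2\setminus 2})=m-3$, which fed into Lemma \ref{regularity-lemma}(ii) gives only $\reg(S/J_{G_2})\le m-2$ and kills the argument. The saving grace is that $m-1$ is \emph{not} a free vertex of $C_{m-3}\cup_{\{3,m-1\}}K_3$, and one must actually compute that the pendant does not raise the regularity; the Matsuda--Murai bound for this graph is only $m-4$, so nothing cheap decides between $m-4$ and $m-3$. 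Second, the structural reason your decomposition is harder than the paper's: in Ohtani's sequence $S/J_{G_2}$ sits as the kernel, so Lemma \ref{regularity-lemma}(ii) charges you $\reg(P)+1$ for the quotient term, forcing you to need the sharp bound $m-4$ there; the paper instead uses the sequence $0\to S/(J_{G_2\setminus e}:f_e)(-2)\to S/J_{G_2\setminus e}\to S/J_{G_2}\to 0$, in which $S/J_{G_2}$ is the cokernel and the colon term enters with a $-1$, so a bound of $m-4$ on the (isomorphic) auxiliary graph suffices with slack. Your plan can be completed, but as written it has a gap at exactly the point you flag as the crux.
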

	\begin{proof}
		Note that $G_2\setminus(m-1)$ is an induced
		path in $G_2$ of length $m-3$. So by
		\cite[Corollary 2.3]{MM}, $m-3\leq \reg(S/J_{G_2})$. Now, we prove that $\reg(S/J_{G_2})\leq m-3$.
		Consider the following short exact
		sequences:
		\begin{align}\label{1st}
			0\longrightarrow S/(J_{G_2\setminus e}:f_e)(-2)\stackrel{\cdot f_e}{\longrightarrow}S/J_{G_2\setminus e}\longrightarrow S/J_{G_2}\longrightarrow 0
		\end{align}
		\begin{align}\label{2nd}
			0\longrightarrow S/(J_{H\setminus e'}:f_{e'})(-2)\stackrel{\cdot f_{e'}}{\longrightarrow}S/J_{H\setminus e'}\longrightarrow S/J_{H}\longrightarrow 0
		\end{align}
		where $H=(G_2\setminus e)_{e}$, $e=\{1,2\}$ and $e'=\{m-1,m\}$. Since
		$G_2 \setminus e$ is a graph isomorphic to $G_1$ on $m-1$ vertices,
		we get $\reg(S/J_{G_2 \setminus e}) = m-3$. By \cite[Theorem 3.7]{FM}, $J_{G_2\setminus e}:f_e=J_{(G_2\setminus e)_{e}}$ and $J_{H\setminus e'}:f_{e'}=J_{(H\setminus e')_{e'}}$.
		Note that $H\setminus e'=C_{m-3}\cup_{\{3,m-1\}}K_3$ and $(H\setminus
		e')_{e'}=C_{m-4}\cup_{\{3,m-2\}}K_4$. Thus, by using Proposition
		\ref{clique-sum-regularity}, $\reg(S/J_{H\setminus e'})=m-4$ and
		$\reg(S/J_{(H\setminus e')_{e'}})=m-5$. Now, applying Lemma
		\ref{regularity-lemma} on short exact sequences (\ref{1st}) and
		(\ref{2nd}), we get $\reg(S/J_H)\leq m-4$, and hence, $\reg(S/J_{G_2})\leq m-3$.
	\end{proof}
	
Now, we prove the bounds for the regularity of powers of binomial edge
ideals of unicyclic graphs which are almost complete intersections.
\begin{theorem} \label{aci-uc-bound}
Let $G$ be a unicyclic graph on $[n]$ which is not a cycle such that
$J_G$ is an almost complete intersection ideal. Assume that $\K$
is an infinite field. Then, $2s + n - 5 \leq \reg(S/J_G^s) \leq
2s+n-4$.
\end{theorem}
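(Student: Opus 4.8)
The plan is to prove the two inequalities separately, mirroring the strategy used for trees in Theorem \ref{aci-tree}: the upper bound comes from the $d$-sequence machinery of Corollary \ref{reg-d-sequence}, while the lower bound comes from deleting a single cycle vertex to expose a disjoint union of paths as an induced subgraph and then invoking Proposition \ref{induced-subgraph}. The essential input is the classification of unicyclic almost complete intersection binomial edge ideals from \cite{JAR1}: when the girth is at least $4$, such a $G$ is a cycle with pendant paths attached either at one cycle vertex (the $G_1$-type situation) or at two \emph{adjacent} cycle vertices (the $G_2$-type situation), and $J_G$ is generated by a $d$-sequence.

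For the upper bound I would first note that a unicyclic graph on $[n]$ has exactly $n$ edges, so $J_G$ has $n$ quadratic generators. Using \cite{JAR1}, one orders these generators so that the first $n-1$, corresponding to the spanning path obtained by deleting a single cycle edge incident to the degree-three vertices, form a regular sequence, while the whole family is a $d$-sequence. With $d=2$ and $\sum_{i=1}^{n-1} d_i - n = 2(n-1) - n = n-2$, Corollary \ref{reg-d-sequence} gives
\[
\reg(S/J_G^s) \leq 2(s-1) + \max\{\reg(S/J_G),\, n-2\}.
\]
Since Corollary \ref{unicyc-g} yields $\reg(S/J_G) = \iv(G)+1 \leq n-2$, the maximum equals $n-2$, and hence $\reg(S/J_G^s) \leq 2(s-1) + (n-2) = 2s + n - 4$.

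For the lower bound I would exhibit a vertex $v$ on the cycle whose deletion turns $G$ into a disjoint union of at most two paths on $n-1$ vertices. Deleting a degree-three attachment vertex simultaneously opens the unique cycle and detaches a pendant path; because the degree-three vertices are adjacent (or there is only one, and for a pure cycle deleting any vertex already produces a single path), no vertex of degree three survives, so $G\setminus v$ has maximum degree at most $2$ and at most two connected components. Consequently $J_{G\setminus v}$ is generated by a regular sequence of quadrics of length at least $(n-1)-2 = n-3$, and \cite[Lemma 4.4]{STT} gives $\reg(S/J_{G\setminus v}^s) = 2s + (n-3) - 2 = 2s + n - 5$. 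As $G\setminus v$ is an induced subgraph of $G$, Proposition \ref{induced-subgraph} yields $\reg(S/J_G^s) \geq \reg(S/J_{G\setminus v}^s) \geq 2s + n - 5$, which finishes the argument.

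The main obstacle is the structural input rather than the homological bookkeeping: both bounds hinge entirely on the precise shape of unicyclic almost complete intersection graphs supplied by \cite{JAR1}. This shape is exactly what guarantees, on the one hand, that deleting one cycle edge produces a \emph{spanning path} (so the first $n-1$ generators genuinely form a regular sequence, licensing Corollary \ref{reg-d-sequence}), and on the other hand that deleting one cycle vertex produces at most two paths (so the regular-sequence regularity formula applies with a loss of at most two generators). Once this classification is in hand, both inequalities are short; the only place requiring genuine care is checking the $d$-sequence ordering and correctly counting the path components of $G\setminus v$ separately in the one-attachment and the two-adjacent-attachments cases.
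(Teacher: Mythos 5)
Your proposal is correct and follows essentially the same route as the paper: the upper bound via Corollary \ref{reg-d-sequence} together with Corollary \ref{unicyc-g}, and the lower bound by deleting a degree-three cycle vertex to leave a disjoint union of paths whose binomial edge ideal is a complete intersection of quadrics, then applying \cite[Lemma 4.4]{STT} and Proposition \ref{induced-subgraph}. You merely make explicit the bookkeeping (the spanning-path ordering of the $d$-sequence and the count $\sum_{i=1}^{n-1}d_i-n=n-2$) that the paper leaves implicit.
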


\begin{proof}
If $G$ is a unicyclic graph on $[n]$ such that $J_G$ is an almost
complete intersection ideal, then it was shown in \cite{JAR1} that
$G$ is obtained either by identifying a pendant vertex of a path
with the vertex $1$ in $G_1$ or by identifying a pendant vertex
each of two paths with the vertices $1$  and $m$ in $G_2$ or
attaching  paths to every vertex of $K_3$. If $u$ is a degree $3$
vertex of $G$, then $G \setminus u$ is a disjoint union of two
paths so that $J_{G \setminus u}$ is generated by a regular
sequence of length $n-3$.  Hence, by \cite[Lemma 4.4]{STT} and
Proposition \ref{induced-subgraph}, $2s+n-5 \leq \reg(S/J_G^s)$.
Since $J_G$ is an almost complete intersection ideal, it follows
from \cite[Proposition 5.1]{GM99}  that there exists a set of
homogeneous generators $\{F_1,\ldots,F_{n}\}$ of $J_G$ such that
$F_1,\ldots, F_{n-1}$ is a regular sequence in $S$.  Since
$J=(F_1,\ldots,F_{n-1})$ is an unmixed ideal, by  \cite[Theorem
4.7]{HMV89}, $J:F_{n} =J:F_{n}^2$. Hence, $J_G$ is generated by a
homogeneous $d$-sequence $F_1,\ldots,F_{n}$. Since $J_G$ is
generated in degree $2$, $F_i$ has degree $2$ for each $i$. Now,
the upper bound follows directly from Corollaries
\ref{reg-d-sequence} and \cite[Theorem 3.2]{KMJCTA}.
\end{proof}
\begin{remark}{\em
Suppose $G$ is a balloon graph on $[n]$, i.e., $G$ is obtained by
identifying a pendant vertex  of a path with the pendant vertex $G_1$.
Let $v$ denote a neighbor on the cycle of the degree $3$ vertex in
$G$. Then, $G \setminus v$ is a path of length $n-2$. Hence, by
Proposition \ref{induced-subgraph}, $2s + n - 4 \leq \reg(S/J_G^s)$
for all $s \geq 1$. Therefore, by Theorem \ref{aci-uc-bound},
$\reg(S/J_G^s) = 2s + n - 4$.
}
\end{remark}

\begin{remark}{\em
If $G$ is a unicyclic graph obtained by identifying a pendant vertex
of two distinct paths
to two pendant vertices of the graph $G_2$, then the regularity behaves
in an unexpected manner. For example, if we take $G=G_2$ on $6$
vertices, then $G$ is a Cohen-Macaulay bipartite graph, \cite{dav}.
Hence, $\reg(S/J_G) = 3$, \cite{JA1}. Using any computational
commutative algebra package, for example Macaulay 2 \cite{M2}, it can
be seen that $\reg(S/J_G^2) = 6 = \reg(S/J_G) + 3$. This is different
behavior in comparison with the regularity of powers of monomial edge
ideals.  It has been conjectured and is believed to be true, that
$\reg(I(G)^s) \leq 2s + \reg(I(G)) - 2$ for all $s \geq 1$, where
$I(G)$ denotes the monomial edge ideal corresponding to a graph $G$,
\cite{BBH17}. This example shows that such an inequality does not hold
in the case of binomial edge ideals. Moreover, this gives a
binomial edge ideal for which the stabilization index is bigger than
$1$.
}
\end{remark}
\section{Regularity of powers of parity binomial edge ideals}

In this section, we study the regularity of powers of parity binomial
edge ideals of some classes of graphs. It was proved by Bolognini et
al., that for bipartite graphs, the parity binomial edge ideals are
essentially the same as the binomial edge ideals, \cite[Corollary
6.2]{dav}. This implies that the algebraic invariants associated to
both these ideals are the same for bipartite graphs.  Hence, we
restrict our attention to graphs containing odd cycles.  In
\cite{AR5}, Kumar computed the regularity of parity binomial edge
ideals of odd cycles. We compute the regularity of their powers. 
\begin{theorem}\label{p-cycle}
Let $n \geq 3$ be an odd integer. Then, for all $s \geq 1$,
$\reg(S/\mathcal{I}_{C_n}^s)=2s+n-2.$
\end{theorem}
\begin{proof}
It follows from \cite[Theorem 3.5]{AR4} that $\mathcal{I}_{C_n}$ is
generated by a regular sequence of length $n$ in degree $2$.
Therefore, by virtue of \cite[Lemma 4.4]{STT},
$\reg(S/\mathcal{I}^s_{C_n})=2s+n-2$ for all $s \geq 1$.
\end{proof}

If $H$ is an induced subgraph of $G$, then
$\beta_{i,j}(S/\mathcal{I}_H) \leq \beta_{i,j}(S/\mathcal{I}_G)$ for
all $i,j$, \cite{AR5}. We generalize this to all powers. For an edge
$\{i,j\}, i < j$, let $\bar{g}_e = x_ix_j - y_iy_j$.

\begin{proposition}\label{p-betti-induced}
Let $G$ be a graph and $H$ be an induced subgraph of $G$. Then
$\beta_{i,j}^{S_H}(S_H/\mathcal{I}_H^s) \leq
\beta_{i,j}^S(S/\mathcal{I}_G^s)$, for all $i,j$, where
$S_H=\K[x_k,y_k : k \in V(H)]$.
\end{proposition}
\begin{proof}
The proof is essentially the same as the proof of Proposition
\ref{induced-subgraph}. One only needs to replace $f_{e_i}$ by $\bar{g}_{e_i}$.
\end{proof}
	
Let $\oc(G)$ denote the length of a longest induced odd cycle.  If $G$
has no induced odd cycle, then we assume that $\oc(G)=0$. In \cite{AR5},
Kumar proved that  $\reg(S/\mathcal{I}_G) \geq
\max\{\ell(G),\oc(G)\}$. We generalize this to all powers.
	\begin{corollary}\label{p-lower-bound}
		Let $G$ be a connected graph. Then, $\reg(S/\mathcal{I}^s_G)\geq 2s+\max\{\ell(G),\oc(G)\}-2$ for all $s \geq 1$.
	\end{corollary}
	\begin{proof}
		Without loss of generality, we assume that $G$ is a non-bipartite
		graph.	Let $H$ be a longest induced path of $G$. Then, $H$ is an
		induced subgraph of $G$. Since, $H$ is a bipartite graph, by
		\cite[Corollary 6.2]{dav} and Observation \ref{ttt},
		$\reg(S_H/\mathcal{I}_H^s) =2s +\ell(G)-2$ for all $s \geq 1$. Now,
		let $H'$ be a longest induced odd cycle. By Theorem \ref{p-cycle},
		$\reg(S_{H'}/\mathcal{I}_{H'}^s) = 2s + \oc(H') - 2 = 2s + \oc(G) -
		2$ for all $s \geq 1$. Hence, the
		assertion follows from Proposition \ref{p-betti-induced}.
	\end{proof}
	For the rest of the section, we assume that  $\chara(\K) \neq 2$. In
	the next three results, we study the regularity of powers of parity
	binomial edge ideals which are generated by $d$-sequence. 
	\begin{theorem}
		Let $G$ be a  graph on $[n]$ obtained by adding an edge between an odd
		cycle and an internal vertex of a path. Then, $2s+n-5 \leq \reg(S/\mathcal{I}_G^s) \leq 2s+n-4$, for all $s \geq 1$. 
	\end{theorem}
	\begin{proof}
		Let $G$ be obtained by adding edge $e = \{u, v\}$ between an odd cycle
		and a path, where $u$ lies on
		an odd cycle and $v$ is an internal vertex of a path.
		Note that $\mathcal{I}_G=\mathcal{I}_{G\setminus e} +(\bar{g}_e)$. By
		\cite[Corollary 3.6]{AR4}, $\mathcal{I}_{G\setminus e}$ is a complete
		intersection ideal. It follows from the proof of  \cite[Theorem
		3.8(1)]{AR4} that $\mathcal{I}_G$ is generated by a $d$-sequence of
		length $n$ such that first $n-1$ elements form a regular sequence.
		Thus, by Corollary \ref{reg-d-sequence}, \[\reg(S/\mathcal{I}_G^s)\leq
		2(s-1)+\max\{\reg(S/\mathcal{I}_G),n-2\}, \text{ for all } s \geq
		1.\] Note that there exists an edge $e'$ such that $G\setminus e'$
		is a bipartite graph and $e' \cap \{u,v\} = \emptyset$. Consider the
		short exact sequence 
		\begin{equation}\label{ses-ls}
			0 \longrightarrow \frac{S}{\mathcal{I}_{G\setminus
					{e'}}:\bar{g}_{e'}}(-2) \stackrel{\cdot \bar{g}_{e'}}\longrightarrow
			\frac{S}{\mathcal{I}_{G\setminus {e'}}} \longrightarrow
			\frac{S}{\mathcal{I}_G} \longrightarrow 0.\end{equation} It follows
		from \cite[Lemma 3.3]{AR4} that $\mathcal{I}_{G\setminus
			{e'}}:\bar{g}_{e'}=\mathcal{I}_{G\setminus {e'}}$. Hence, by Lemma
		\ref{regularity-lemma}, $\reg(S/ \mathcal{I}_G)
		=\reg(S/\mathcal{I}_{G\setminus {e'}})+1$. Note that $G \setminus
		{e'}$ is a tree of $H$-type. By \cite[Corollary 6.2]{dav} and the 
		proof of Theorem \ref{aci-tree},
		$\reg(S/\mathcal{I}_{G \setminus e'}) = n-3$ so that
		$\reg(S/\mathcal{I}_G) = n-2$.
		Hence, $\reg(S/\mathcal{I}_G^s) \leq 2s+n-4$, for all $s \geq 1$. Note
		that $G \setminus u$ is disjoint union of two paths on $n-1$ vertices.
		Therefore, $\reg(S/\mathcal{I}_{G \setminus u}^s)=2s + n-5 $, for all
		$s \geq 1$. Thus, by Proposition \ref{p-betti-induced}, $2s+n-5 \leq \reg(S/\mathcal{I}_G^s)$, for all $s \geq 1$. 
	\end{proof}
	\begin{theorem}
		Let $G$ be a balloon graph on $[n]$ having odd girth. Then, for all $s
		\geq 1$ $2s+n-4 \leq  \reg(S/\mathcal{I}_G^s) \leq 2s+n-3$. 
	\end{theorem}
	\begin{proof}
		Since $\ell(G) = n-1$, it follows from Corollary \ref{p-lower-bound}
		that $2s+n-4 \leq \reg(S/\mathcal{I}_G)$ for all $s \geq 1$.
		Let $u$ be the vertex of degree three in $G$ and $v$ be a neighbor of
		$u$ on the cycle. Set $e =\{u,v\}$. Then, $G \setminus e$ is a path graph. It
		follows from the proof of  \cite[Theorem 3.8(1)]{AR4} that $\mathcal{I}_G$
		is generated by a $d$-sequence of length $n$ such that first $n-1$
		elements form a regular sequence. Thus, by Corollary
		\ref{reg-d-sequence}, \[\reg(S/\mathcal{I}_G^s)\leq
		2(s-1)+\max\{\reg(S/\mathcal{I}_G),n-2\}, \text{ for all } s \geq
		1.\] If one takes $e'$ to be any edge on the cycle such that $e' \cap
		\{u\} = \emptyset$, then $G\setminus e'$ is a bipartite graph, and hence, it follows from \cite[Lemma 3.3]{AR4} that
		$\mathcal{I}_{G\setminus {e'}}:\bar{g}_{e'}=\mathcal{I}_{G\setminus
			{e'}}$. Thus, by Lemma \ref{regularity-lemma}, $\reg(S/
		\mathcal{I}_G) =\reg(S/\mathcal{I}_{G\setminus {e'}})+1$. Note that
		$G \setminus {e'}$ is a tree of $T$-type. 
		Therefore, by \cite[Corollary 6.2]{dav} and the proof of Theorem
		\ref{aci-tree}, $\reg(S/\mathcal{I}_{G \setminus e'}) = n-2$ so
		that $\reg(S/\mathcal{I}_G) = n-1$. Hence, $\reg(S/\mathcal{I}_G^s)
		\leq 2s+n-3$, for all $s \geq 1$.
	\end{proof}
	
	\begin{theorem}\label{p-chord}
		Let $G$ be a graph obtained by adding a chord in  an odd cycle $C_n$. Then, $2s+n-4 \leq \reg(S/\mathcal{I}_G^s)\leq 2s+n-3$, for all $s \geq 1$. 
	\end{theorem}
	\begin{proof}
		Since $\ell(G) = n-1$, it follows from Corollary \ref{p-lower-bound}
		that $2s+n-4 \leq \reg(S/\mathcal{I}_G)$ for all $s \geq 1$.
		Let $e = \{u, v\}$ be the chord in $C_n$.  Then, $\mathcal{I}_{G
			\setminus e}$ is a
		complete intersection. Moreover, $\mathcal{I}_{G \setminus e} :
		\bar{g}_e^2 = \mathcal{I}_{G \setminus e} : \bar{g}_e$. Hence,	$\mathcal{I}_G$ is generated by a $d$-sequence of length $n+1$.
		Therefore, by Corollary \ref{reg-d-sequence},
		$\reg(S/\mathcal{I}_G^s)\geq
		2(s-1)+\max\{\reg(S/\mathcal{I}_G),n-1\}$. To complete the proof, it is enough to show that $\reg(S/\mathcal{I}_G) \leq n-1$.
		The chord $e$ splits the graph $G$ into two induced cycles, an odd
		cycle and an even cycle, whose intersection is $e$.
		Let $e'=\{v,w\} \in E(G)$ be an edge of the induced odd cycle in $G$. 
		Observe that $G\setminus e'$ is a balloon graph having even
		girth. Since $G \setminus e'$ is a bipartite graph and for a bipartite
		graph the parity
		binomial edge ideal is isomorphic to the binomial edge ideal, we
		conclude using \cite[Theorem 3.2]{KMJCTA} that
		$\reg(S/\mathcal{I}_{G\setminus e'}) \leq n-2$.  Also, from
		\cite[Lemma 3.3]{AR4} we get $\mathcal{I}_{G \setminus e'}
		:\bar{g}_{e'}\simeq J_{(G\setminus e')_{e'}}$. Notice that
		$(G\setminus e')_{e'} =(G\setminus e')_v$ is not a path graph.
		Therefore, by \cite{KMJCTA}, $\reg(S/J_{(G\setminus e')_v}) \leq n-2$.
		Hence, the assertion follows  by applying Lemma \ref{regularity-lemma},
		on the short exact sequence \eqref{ses-ls}.
	\end{proof}
	
	\bibliographystyle{plain}  %% or 
	\bibliography{Reference}
	
\end{document}